\documentclass[reqno,final,a4paper]{amsart}
\usepackage{color}
\usepackage{amsmath, amssymb, amsthm, thmtools}
\usepackage[colorlinks=true,allcolors=blue
]{hyperref}
\usepackage{mathrsfs}
\usepackage{mathtools}
\usepackage[linesnumbered,ruled,vlined]{algorithm2e}

\SetCommentSty{mycommfont}
\SetKwInOut{Input}{input}\SetKwInOut{Output}{output}

\usepackage[noadjust]{cite}
\usepackage[noabbrev,capitalize,nameinlink]{cleveref}
\crefname{equation}{}{}
\usepackage{fullpage}
\usepackage{graphics}
\usepackage{tikzscale}
\usepackage{pifont}
\usepackage{tikz}
\usetikzlibrary{fit, backgrounds, shapes, calc}
\usepackage{pgfplots}
\pgfplotsset{compat=1.12}
\usepackage{bbm}
\usepackage[T1]{fontenc}
\usetikzlibrary{arrows.meta}

\usepackage{environ}
\usepackage{framed}
\usepackage{url}

\usepackage[noend]{algpseudocode}
\usepackage[labelfont=bf]{caption}
\usepackage{framed}
\usepackage[framemethod=tikz]{mdframed}
\usepackage{appendix}
\usepackage{graphicx}
\usepackage[textsize=tiny]{todonotes}
\usepackage{tcolorbox}
\usepackage{xcolor}
\usepackage[shortlabels]{enumitem}
\crefformat{enumi}{#2#1#3}
\crefrangeformat{enumi}{#3#1#4 to~#5#2#6}
\crefmultiformat{enumi}{#2#1#3}
{ and~#2#1#3}{, #2#1#3}{ and~#2#1#3}
\allowdisplaybreaks

\usepackage{ifthen}
\numberwithin{equation}{section}
\newtheorem{theorem}{Theorem}[section]
\newtheorem{proposition}[theorem]{Proposition}
\newtheorem{lemma}[theorem]{Lemma}
\newtheorem{claim}[theorem]{Claim}
\crefname{claim}{Claim}{Claims}

\newtheorem{corollary}[theorem]{Corollary}

\newtheorem*{question*}{Question}

\theoremstyle{definition}
\newtheorem{definition}[theorem]{Definition}
\newtheorem{problem}[theorem]{Problem}

\newtheorem*{definition*}{Definition}

\newtheorem*{fact*}{Fact}

\crefname{fact}{Fact}{Facts}

\theoremstyle{remark}

\newcommand{\R}{\mathbb{R}}
\newcommand{\N}{\mathbb{N}}

\renewcommand{\Pr}{\mathbb{P}}
\newcommand{\E}{\mathbb{E}}
\newcommand{\eps}{\varepsilon}

\newcommand{\Var}{\operatorname{Var}}

\newcommand{\Ber}{\operatorname{Ber}}

\newcommand{\Bin}{\operatorname{Bin}}

\newcommand{\SMA}{\operatorname{SMALL}}
\newcommand{\dist}{\operatorname{dist}}

\def\calB{\mathcal{B}}

\def\calD{\mathcal{D}}

\def\calH{\mathcal{H}}

\def\calP{\mathcal{P}}

\def\epsilon{\varepsilon}

\newenvironment{proofclaim}[1][Proof of claim]{\begin{proof}[#1]}{\end{proof}}

\usepackage{mleftright}
\mleftright

\title{On the hitting time of Hamiltonicity in bipartite Dirac graphs}

\author{Yiting Wang}\address{Institute of Science and Technology Austria,~Klosterneuburg,~3400,~Austria.}\email{yiting.wang@ist.ac.at}

\thanks{Yiting Wang is supported by the European Research Council (ERC), via grant agreement ``RANDSTRUCT'' No.\ 101076777.}

\date{\today}

\begin{document}

\begin{abstract}
Let $\eps\in (0,1/2]$ and let $G$ be a balanced bipartite graph on $2n$ vertices with minimum degree at least $(1/2 + \eps)n$. Then, whp, the hitting time for minimum degree 2 coincides with the hitting time for Hamiltonicity. This extends Bollob\'{a}s--Kohayakawa and gives a bipartite analogue of Johansson's theorem.
As an immediate corollary, we deduce a sharp threshold result for Hamiltonicity in such graphs. 
\end{abstract}
\maketitle

\section{Introduction}
Recall that a Hamilton cycle is a vertex-spanning cycle. 
Deciding whether a graph is Hamiltonian (i.e.\ contains a Hamilton cycle) is one of Karp's 21 NP-complete problems~\cite{Karp}. 
A central theme in graph theory is to find sufficient conditions for the existence of a Hamilton cycle. 
The classical theorem of this type is Dirac's theorem, which states that if $G$ is a graph on $n\geq 3$ vertices with minimum degree at least $n/2$, then $G$ is Hamiltonian.
A bipartite analogue of Dirac's theorem was proved by Moon and Moser~\cite{Moon-Moser}:
\begin{theorem}[Moon, Moser]\label{thm:Moon-Moser}
    Let $G$ be a balanced bipartite graph on $2n$ vertices. If $\delta(G) \geq (n+1)/2$, then $G$ is Hamiltonian.
\end{theorem}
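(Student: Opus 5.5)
The plan is the standard Pósa-type extremal argument, adapted to the bipartite setting. Let $G$ have parts $A$ and $B$, with $|A|=|B|=n$ and $\delta(G)\ge (n+1)/2$.

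First, connectivity. Any two vertices $a,a'\in A$ have a common neighbour, since $|N(a)|+|N(a')|\ge n+1>|B|$. The same holds for any two vertices of $B$. Any $a\in A$ and $b\in B$ are joined by a path of length $1$ or $3$. Indeed, if $ab\notin E$, pick a neighbour $b'$ of $a$; then $b'$ and $b$ have a common neighbour $a'$, and $a\,b'\,a'\,b$ is a path. So $G$ is connected.

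Next, take a longest path $P=v_1v_2\dots v_m$ in $G$. By maximality, $N(v_1)\cup N(v_m)\subseteq V(P)$. We split according to parity.

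\emph{Case $m$ odd.} Then $v_1$ and $v_m$ lie in the same part, say $A$. The path has $(m+1)/2$ vertices of $A$ and $(m-1)/2$ of $B$. Since $|N(v_1)|\ge (n+1)/2$ and all these neighbours lie in $B\cap V(P)$, we get $(m-1)/2\ge (n+1)/2$. Now apply the usual crossing-edge trick. Let
\[
S=\{i : v_1v_{i+1}\in E\},\qquad T=\{i : v_iv_m\in E\}.
\]
Both sets are contained in the set of indices $i$ with $v_i\in A$, $v_{i+1}\in B$, which has $(m-1)/2$ elements. Each of $S,T$ has size at least $(n+1)/2$. If $S\cap T\ne\emptyset$, we obtain a cycle on the vertex set of $P$ minus $v_m$, or some similar rotated configuration. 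The clean way to handle this case is different: since $|A\cap V(P)|=(m+1)/2\le n$, there is a vertex $b\in B\setminus V(P)$. Its $\ge (n+1)/2$ neighbours in $A$ must meet the relevant positions, and this lets us extend the path, a contradiction. The cleaner route overall is to reduce to a cycle, as follows.

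\emph{Case $m$ even.} Then $v_1\in A$ and $v_m\in B$. Let
\[
S=\{i: v_1v_{i+1}\in E\},\qquad T=\{i : v_iv_m\in E\}.
\]
Both are subsets of the $m/2$ indices $i$ with $v_i\in B$ and $v_{i+1}\in A$. (With the convention that $v_1\in A$, the odd positions are in $A$, so these are the even $i$.) If $m/2<n$, then $|S|+|T|\ge n+1>m/2$, so some $i$ lies in $S\cap T$. This gives the cycle
\[
v_1v_{i+1}v_{i+2}\dots v_m v_i v_{i-1}\dots v_1
\]
on $V(P)$. By connectivity some vertex outside the cycle is adjacent to it, and opening the cycle there yields a longer path, a contradiction. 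Hence $m=2n$, i.e.\ $P$ is a Hamilton path. Repeating the count now gives $|S|+|T|\ge n+1>n$, so $S\cap T\neq\emptyset$, and the same rotation closes $P$ into a Hamilton cycle.

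It remains to rule out the odd case, where the longest path has both ends in $A$, say. The key step is the following. There exists $b\in B\setminus V(P)$, because $P$ uses only $(m-1)/2<n$ vertices of $B$; here we use $(m+1)/2\le n$, which holds since $P$ uses at most $n$ vertices of $A$. Every $A$-neighbour of $b$ lies on $P$: if $b$ were adjacent to $v_1$ or $v_m$, we could extend $P$ directly. Now consider the path $P'=v_1\dots v_{m-1}$, which is even and has its ends in opposite parts. Run the even-case argument on $P'$, with $v_{m-1}$ as the $B$-end. The neighbours of $v_{m-1}$ include $v_m$, which is off $P'$. The neighbours of $v_1$ lie on $P$, and at most one of them (namely $v_m$'s position aside) needs adjusting. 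The counting $|S|+|T|\ge n+1-O(1)$ versus the $\le (m-1)/2\le n-1$ available slots still forces an intersection, giving a cycle on $m-1$ vertices. Attaching $v_m$, which is adjacent to $v_{m-1}$ on the cycle, then yields a path of length $m$ whose endpoint can be rerouted to $b$ or to another vertex outside, giving a longer path. This contradicts maximality.

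This odd-parity bookkeeping is the main obstacle. What I would try first is simply to choose $P$ as a longest path and, among those, one with even length, arguing via the extra vertex $b$ that an odd longest path can always be converted into an even one of the same or greater length.
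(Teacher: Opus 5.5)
The paper quotes this theorem from Moon and Moser and gives no proof, so your argument has to stand on its own. Your connectivity step and your even case are correct, apart from a harmless slip: with $v_1\in A$, the sets $S,T$ lie in the $m/2$ \emph{odd} indices $i$, where $v_i\in A$ and $v_{i+1}\in B$. The gap is the odd case. It is genuinely needed, since nothing you have said prevents a longest path from having both ends in $A$, and you never actually close it. Three steps in your final paragraph fail as written.

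First, ``every $A$-neighbour of $b$ lies on $P$'' does not follow from $b\not\sim v_1,v_m$. A priori $b$ can have up to $|A\setminus V(P)|=n-(m+1)/2$ neighbours off $P$.

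Second, in $P'=v_1\cdots v_{m-1}$ the end $v_{m-1}$ is an interior vertex of $P$. Its neighbours off $P'$ are $v_m$ together with possibly all of $A\setminus V(P)$, so the loss is not $O(1)$, and comparing $n+1-O(1)$ with $n-1$ is not valid. An intersection does still follow if you count exactly: $|S'|\ge (n+1)/2$ and $|T'|\ge (m-n)/2$, against $(m-1)/2$ available odd indices.

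Third, once you have the $(m-1)$-cycle with $v_m$ hanging off $v_{m-1}$, the resulting $m$-vertex path ends at an $A$-vertex of the cycle. You give no argument that some rerouting has an end adjacent to a vertex off the path. That step needs its own counting, e.g.\ finding a cycle edge $a'b'$ with $b\sim a'$ and $v_m\sim b'$.

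Your fallback of converting an odd longest path into an even one ``of the same length'' is impossible, since the length fixes the parity. What you need is a contradiction.

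The odd case has a short direct fix using your vertex $b$, with no cycle needed. Let $P=v_1\cdots v_m$ be a longest path with $m$ odd and $v_1,v_m\in A$. Pick $b\in B\setminus V(P)$, which exists since $|V(P)\cap B|=(m-1)/2\le n-1$. By maximality $b\not\sim v_1,v_m$. So $K=\{k: bv_k\in E\}$ consists of odd $k$ with $1<k<m$, and $|K|\ge \deg(b)-|A\setminus V(P)|\ge (n+1)/2-n+(m+1)/2=(m+2-n)/2$. For each $k\in K$ we must have $v_1v_{k+1}\notin E$, since otherwise $b\,v_kv_{k-1}\cdots v_1\,v_{k+1}\cdots v_m$ is a path on $m+1$ vertices. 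All neighbours of $v_1$ lie among the $(m-1)/2$ vertices $v_j$ with $j$ even, so $(n+1)/2\le\deg(v_1)\le (m-1)/2-|K|\le (n-3)/2$, a contradiction. Hence every longest path has an even number of vertices, and your even case completes the proof.
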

Note that the bound is best possible, since one can take two vertex-disjoint copies of $K_{n/2,n/2}$. Such a graph is not connected, let alone Hamiltonian.

Both Dirac and the Moon-Moser theorems apply only to very dense graphs. 
What about sparser graphs? Ajtai, Koml\'os, Szemer\'edi~\cite{Ajtai-Komlos-Szemeredi} and independently Bollob\'{a}s~\cite{Bollobas} famously proved that the Erd\H{o}s-Renyi random graph $G(n,p)$ for $p\geq (\log n+ \log \log n+ \omega(1))/n$ is Hamiltonian whp. 
It is easy to see that such a graph has only $(1+o(1))\binom{n}{2}p = O(n\log n)$ edges whp, which is much sparser than the $\Theta(n^2)$ density required by the two theorems above.
In fact, they proved a stronger hitting-time theorem. To state it, we first introduce the relevant definitions.

Let $G$ be a graph on $n$ vertices. The \emph{random subgraph process} on $G$ is defined as follows: Let $(e_1,\dots, e_{|E(G)|})$ be a uniformly random ordering of the edges of $G$. Then, starting from the empty graph $G_0$, define $G_{i+1} = G_i \cup \{e_{i+1}\}$ for all $0\leq i< |E(G)|$. 
Given a monotone increasing graph property\footnote{A graph property is a set of graphs that are closed under graph isomorphism. Monotone increasing means closed under edge additions.}
$\calP$, let $\tau(\calP)$ be the minimum number $t\in \N$ such that $G_t\in \calP$. Note that $\tau(\calP)$ is a random variable.
Let $\calD_2$ denote the property of having minimum degree $2$ and $\calH$ denote the property of being Hamiltonian. Define $\tau_2 = \tau(\calD_2)$ and $\tau_\calH = \tau(\calH)$. Deterministically, $\tau_\calH\geq \tau_2$, since a graph with a vertex of degree at most $1$ cannot be Hamiltonian.

The hitting time theorem of~\cite{Ajtai-Komlos-Szemeredi, Bollobas} states that if $G= K_n$, then $\tau_2 = \tau_{\calH}$ whp.
Combining this result with a simple first and second moment computation for vertices of degree at most $1$ yields the $G(n,p)$ result stated above. In fact, the hitting-time result is, in this sense, the strongest result one can ask for because it pins down the exact obstruction to Hamiltonicity. 
It is natural to study whether the same result still holds with $K_n$ replaced by a sparser graph. Several such results are known.
Bollob\'{a}s and Kohayakawa~\cite{Bollobas-Kohayakawa} proved it for $G = K_{n,n}$. Johansson~\cite{Johansson} proved it for $\eps$-super-Dirac graphs (i.e.\ $\delta(G)\geq (1/2+\eps)n$). This result was also proved by Alon and Krivelevich~\cite{Alon-Krivelevich} using a different approach. There are also results for $(n,d,\lambda)$-graphs~\cite{Frieze-Krivelevich, Alon-Krivelevich, Chen-Chen-Im-Wang} and the $n$-dimensional hypercube~\cite{Condon-Espuny-Girao-Kuhn-Osthus}.

In this paper, we are interested in bipartite $\eps$-super-Dirac graphs. 
The main result is the following:
\begin{theorem}\label{thm:main}
    Let $\eps \in (0,1/2]$ and $G$ be a balanced bipartite graph with $2n$ vertices and $\delta(G) \geq (1/2 + \eps)n$. Then, whp, we have $\tau_2(G) = \tau_\calH(G)$.
\end{theorem}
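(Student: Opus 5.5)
The plan is to follow the standard Pósa rotation--extension framework, as in the proofs of Bollob\'as--Kohayakawa for $K_{n,n}$ and of Alon--Krivelevich for super-Dirac graphs, adapted to the bipartite setting. Let $m_0$ be such that $G_{m_0}$ has average degree about $\log n$; more precisely, take $m_0$ so that whp $\tau_2 \ge m_0$, for instance $m_0 = (1/2+\eps)n\cdot \tfrac{1}{2}\log n$ scaled appropriately. Write $\tau_2$ for the hitting time, and $p_2\approx \tau_2/|E(G)|$ for the corresponding edge probability, which is of order $\log n/n$.

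First I will establish typical properties of $G_{\tau_2}$, using a two-round exposure via the coupling between $G_t$ and $G_p$. The first property concerns the set $\SMA$ of vertices of degree at most $\delta_0\log n$ in $G_{\tau_2}$: whp no two such vertices are within distance $4$ of each other, and $|\SMA| \le n^{0.1}$. These are first-moment computations, which use $\delta(G)\ge (1/2+\eps)n$ only to lower-bound the expected degrees. The second property is a bipartite expansion statement: every set $S$ on one side of size at most $\beta n$ satisfies $|N(S)| \ge 2|S|$ in $G_{\tau_2}$, where sets containing small vertices are handled by the distance property. The third is that any two disjoint sets $A\subseteq V_1$, $B\subseteq V_2$ of linear size $\beta n$ span an edge.

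The step that really uses the Dirac condition is this last one together with connectivity. In a bipartite graph with $\delta\ge(1/2+\eps)n$, any $A\subseteq V_1$ and $B\subseteq V_2$ of size $(1/2-\eps/2)n$ have at least $\Omega(\eps n^2)$ edges between them in $G$. Hence, after sprinkling, such pairs are joined whp. Combined with expansion, this upgrades small-set expansion to the statement that $G_{\tau_2}$ is a bipartite expander with $|N(S)|\ge 2|S|$ for small $S$ and connectivity between large sets. Pósa's lemma in its bipartite form then yields that, for a longest path with fixed endpoint, the set of reachable endpoints of the other parity has linear size, and by the symmetric argument we get $\Omega(n^2)$ pairs $(x,y)$ with $x\in V_1$, $y\in V_2$ whose addition closes the path into a cycle or extends it. These are the boosters.

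Finally, I will use a sparse expander skeleton to avoid a union bound over all of $G_{\tau_2}$. Following Alon--Krivelevich, I will show that whp $G_{\tau_2}$ contains a subgraph $\Gamma_0$ with $O(n)$ edges which still has the expansion properties above: keep all edges at small vertices, plus $C$ random edges per vertex. A union bound over such $\Gamma_0$ and over at most $n$ rounds of booster additions then shows that every such sparse expander has $\Omega(\eps n^2)$ boosters in $G$, and that a constant fraction of them appear in $G_{\tau_2}$ with probability $1-e^{-\Omega(n\log n)}$. This beats the $n^{O(n)}$ count of skeletons. The main obstacle I expect is making the booster count come out of $G$ rather than $K_{n,n}$. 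The endpoint sets produced by rotations are arbitrary linear-size subsets of $V_1$ and $V_2$, and I must guarantee that $G$ itself, not merely $K_{n,n}$, has $\Omega(n^2)$ edges between them. Plain Pósa sets of size $\beta n$ with small $\beta$ do not guarantee this, so I will first try to push the expansion all the way to endpoint sets of size more than $(1/2-\eps)n$ on each side, by iterating the large-set connectivity, so that the Dirac degree bound forces many $G$-edges between them.
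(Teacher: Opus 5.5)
Your plan breaks at the obstacle you flag, and the fix you propose cannot work. A single-edge booster $xy$ with $y\in Y_x$ must be an edge of $G$. Since $x$ may miss $(1/2-\eps)n$ vertices of the other class, the degree condition only helps once $|Y_x|>(1/2-\eps)n$.

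P\'osa's argument cannot give this. It only yields $N_R(X)\subseteq X'$ with $|X'|\le 2|X|-1$, and the expansion you would need is false in $G$ itself. Let $V_1=A_1\cup A_2$ and $V_2=B_1\cup B_2$ with $|A_1|=|B_2|=(1/2-\eps)n$, make $A_1$ complete to $B_1$, and make $A_2$ complete to $V_2$. Then $\delta(G)=(1/2+\eps)n$ and every $S\subseteq A_1$ has $|N_G(S)|\le(1/2+\eps)n$. So no factor-$2$ expansion property of a subgraph of $G$ can force $|X|>(1/4+\eps/2)n$, which is below $(1/2-\eps)n$ when $\eps<1/6$. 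Moreover $e_G(A_1,B_2)=0$, and nothing in your argument rules out $X\subseteq A_1$, $Y_x\subseteq B_2$, in which case no closing edge exists at all.

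The same example shows your third property is false. The Dirac condition only joins $S\subseteq V_1$ and $T\subseteq V_2$ when $|S|\ge\beta n$ and $|T|\ge(1/2-\eps+\beta)n$ (or vice versa). Fed into P\'osa, this gives only $|X'|\ge(1/2+\eps-\beta)n$.

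The missing idea is to use \emph{pairs} of edges, as in the proof of Dirac's theorem. Take a rotated path $P_{s,t}=(v_1,\dots,v_\ell)$ from $s$ to $t$ whose endpoints have at most $\eta n$ $G$-neighbours off $V(P)$. Pigeonhole over positions gives at least $2(\eps-\eta)n-O(1)$ indices $i$ with $sv_{i+1},tv_i\in E(G)$, and adding both edges closes $P$ into a cycle on $V(P)$. No $G$-edge between $s$ and $t$ is needed, so endpoint sets of size $\alpha n$ with $\alpha=1/10$ suffice. This gives $\Omega(n^3)$ booster pairs with each edge in $O(n)$ of them. Endpoints with many $G$-neighbours off the path instead give $\Omega(n^2)$ extension edges. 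Because the boosters are now pairs, you need a Janson-type bound rather than independence of single edges: $\mu=\Theta(n^3p^2)$ and $\Delta=O(n^4p^3)$, so $\mu^2/\Delta=\Omega(n\log n)$.

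Second, you assume the longest path has its endpoints in opposite classes. In a bipartite graph a longest path can have both endpoints in $V_1$, and rotations keep them there. Then no cycle on $V(P)$ can exist, so your booster count does not apply. Once few vertices are uncovered, the extension edges to them number only $O(n)$.

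The paper handles parity with the Bollob\'as--Kohayakawa potential: maximise $\ell(H)$ over almost $2$-factors $H$ of the skeleton. In a maximiser the path component is automatically balanced, because all other components are even cycles, and it is strongly maximal. Every uncovered vertex lies on a cycle component, so any edge from an endpoint to it is an extension booster, and closing the path raises $\ell$ by one. This needs an extra step: $G_\tau$ contains a $2$-factor. The paper proves this via the bipartite $2$-factor criterion, using small-set expansion together with the Dirac-based lower bound on $e_{G_\tau}(S,T)$ for large $S,T$.

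A minor point: in the union bound each skeleton must be weighted by $\Pr(R\subseteq G_m)\le p^{e(R)}$. The bare count $n^{O(n)}$ is not beaten by $e^{-\Omega(n\log n)}$ without controlling constants.
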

This result simultaneously extends the theorem of Bollob\'{a}s and Kohayakawa~\cite{Bollobas-Kohayakawa} from complete bipartite graphs and Johansson's theorem~\cite{Johansson} from $\eps$-super-Dirac graphs to bipartite $\eps$-super-Dirac graphs.
One may wonder if the minimum degree condition in~\Cref{thm:Moon-Moser} suffices for a hitting time result. This is unfortunately not true. We give a sketch in the concluding remarks (\Cref{prop:counter-example}).

By a simple first and second moment computation for vertices of degree at most $1$, we deduce the following corollary:
\begin{corollary}\label{cor:threshold}
    Let $\eps > 0$ and $G$ be a balanced bipartite graph on $2n$ vertices with $\delta(G) \geq (1/2+\eps)n$. 
    If $p\geq (\log n + \log \log n+\omega(1))/\delta(G)$, then the random subgraph $G_p$ is Hamiltonian whp.
    Moreover,
    if $p\leq (\log n + \log \log n-\omega(1))/\delta(G)$ and $G$ has at least $\beta n$ vertices of degree $\delta(G)$ for some $\beta > 0$, then $G_p$ is non-Hamiltonian whp.
\end{corollary}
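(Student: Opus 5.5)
The plan is to derive the corollary from \Cref{thm:main} by coupling the random subgraph $G_p$ with the random subgraph process, and to settle the lower bound directly by counting vertices of degree at most $1$. Let $m=|E(G)|$ and write $\delta=\delta(G)$. Assign to each edge an independent uniform label in $[0,1]$. Ordering the edges by label gives a uniformly random ordering, and $G_p$ is exactly the set of edges with label at most $p$, so $G_p=G_{M}$ with $M=e(G_p)$. Since Hamiltonicity is monotone, $G_p$ is Hamiltonian as soon as $M\ge\tau_\calH$. By \Cref{thm:main} we have $\tau_\calH=\tau_2$ whp, so it suffices to prove that $\delta(G_p)\ge 2$ whp. (Note also that $G_{\tau_2}\subseteq G_p$ exactly when $\delta(G_p)\ge2$, because of the coupling.)

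\emph{Upper bound.} Take $p=(\log n+\log\log n+\omega(1))/\delta$; by monotonicity we may assume $\omega(1)\le\log\log n$, so that $p=o(1)$. Fix a vertex $v$ of degree $d\ge\delta$. Then
\[
\Pr[\deg_{G_p}(v)\le1]=(1-p)^d+dp(1-p)^{d-1}\le(1+dp)e^{-p(d-1)}.
\]
The function $x\mapsto (1+xp)e^{-px}$ is decreasing, so this is maximised at $d=\delta$. There the bound is
\[
(1+o(1))\,\delta p\, e^{-\delta p}=(1+o(1))\log n\cdot\frac{e^{-\omega(1)}}{n\log n}=o(1/n).
\]
A union bound over the $2n$ vertices then gives $\delta(G_p)\ge2$ whp, and hence $G_p$ is Hamiltonian whp.

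\emph{Lower bound.} Now take $p=(\log n+\log\log n-\omega(1))/\delta$; by monotonicity again we may assume the $\omega(1)$ term is small and $p=\Theta(\log n/n)$. Let $S$ be the set of vertices of degree $\delta$, with $|S|\ge\beta n$. Since $\delta\le n$ and $|S|\le 2n$, we may pass to a subset $S'\subseteq S$ of size $\Theta(n)$ lying in one part, which makes $S'$ independent in $G$. Let $X$ be the number of $v\in S'$ with $\deg_{G_p}(v)\le1$. For each $v\in S'$,
\[
\Pr[\deg_{G_p}(v)\le1]\ge\delta p(1-p)^{\delta}=(1+o(1))\log n\, e^{-\delta p}=\Theta\!\left(e^{\omega(1)}/n\right),
\]
so $\E X=\Theta(e^{\omega(1)})\to\infty$. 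For the second moment, observe that two distinct vertices of $S'$ share no edges, so the events $\{\deg(u)\le1\}$ and $\{\deg(v)\le1\}$ are independent. It follows that $\Var X\le\E X$, and Chebyshev's inequality gives $X>0$ whp. A vertex of degree at most $1$ rules out a Hamilton cycle, so $G_p$ is non-Hamiltonian whp.

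Once \Cref{thm:main} is available, none of these steps poses a real obstacle. The only points needing care are:
\begin{itemize}
\item the reductions to the boundary values of $p$ via monotonicity;
\item restricting $S$ to one side of the bipartition, which gives exact independence in the second-moment computation (otherwise one would have to handle the single possible shared edge, contributing a factor of $1+O(p)$).
\end{itemize}
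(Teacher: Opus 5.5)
Your proof is correct. Its skeleton matches the paper's. The upper bound reduces, via \Cref{thm:main}, to showing $\delta(G_p)\ge 2$ whp by a union bound. The lower bound is the same second-moment argument on degree-$\delta(G)$ vertices restricted to one side of the bipartition.

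The genuine difference is how you transfer \Cref{thm:main}, a statement about the random subgraph process, to the binomial model. The paper goes through \Cref{prop:Gm-to-Gp}. It considers every $m'=e(G)p\pm\lambda\sqrt{e(G)p(1-p)}$, passes to $p'=m'/e(G)$, and proves $\delta(G_{p'})\ge 2$. It then transfers this back to $G_{m'}$ with \Cref{prop:model-equiv}, and only then invokes $\tau_\calH=\tau_2\le m'$.

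You instead realise $G_p$ directly as the stage $G_M$ of the process generated by i.i.d.\ uniform labels, with $M=e(G_p)$. This gives $\Pr(G_p\notin\calH)\le\Pr(\tau_\calH\ne\tau_2)+\Pr(\delta(G_p)<2)$ in one line. Your coupling bypasses both model-equivalence propositions and the $\lambda$-window bookkeeping, including the need for the whp statement to hold uniformly over that window, and it loses nothing. The paper's route simply reuses the machinery it had already set up in the preliminaries.

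One small point: your justification ``since $\delta\le n$ and $|S|\le 2n$'' for restricting $S$ to one side is a non sequitur. The actual reason is pigeonhole: $|S\cap A|\ge\beta n/2$ or $|S\cap B|\ge\beta n/2$.
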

This corollary says that $\eps$-super-Dirac bipartite graphs remain Hamiltonian even after edge subsampling. This is related to the well-studied topic of robustness of graph properties. 
For a comprehensive introduction to this topic, see the excellent survey by Sudakov~\cite{Sudakov-survey}.
Note that the additional assumption that many vertices have degree $\delta(G)$ is needed. For instance, a complete bipartite graph with $(1-\eps)n/2$ edges incident to one vertex removed satisfies~\Cref{cor:threshold} with $p = (\log n + \log \log n\pm \omega(1))/n$ rather than $(\log n + \log \log n\pm \omega(1))/\delta(G)$.

\subsection{Proof overview} 
A powerful tool in the study of Hamiltonicity is the rotation-extension technique introduced by P\'osa~\cite{Posa}. 
Starting from a path $P$, a P\'osa rotation replaces $P$ with a different path $P'$ such that $V(P) = V(P')$ but the endpoints of $P$ and $P'$ are different.
By performing P\'osa rotations iteratively, one can find many pairs of endpoints for which there is a path supported on $V(P)$ between them.
The benefit is that if an endpoint either has an edge to a vertex outside the path or closes the path to a cycle, then one immediately obtains a longer path. This is an extension.

Repeating the rotation-extension steps will either find a Hamilton cycle or show that the graph contains a large bipartite hole (i.e.\ two linear-sized vertex sets with no edge between them), which can then be used to derive contradictions.

To implement the rotation-extension technique in the hitting-time setting, we follow an approach pioneered by Montgomery~\cite{Montgomery} (also used in~\cite{Alon-Krivelevich}). We first show that, whp, the hitting-time graph contains a sparse expander. Sparsity is crucial when we union bound over all such graphs.
Second, we show that, whp, every sparse expander has many pairs of edges (called booster pairs) whose addition increases the length of the longest path. The minimum degree condition is crucial for finding many booster pairs.
By adding booster pairs for at most $2n$ rounds, the graph becomes Hamiltonian. This is possible simply because the total number of edges added is at most $4n$, so even the final graph is still sparse enough for the application of the second step.

However, the above strategy has a critical flaw for bipartite graphs.
The rotation-extension technique relies on having many edges between two linear-sized sets. This is clearly not true for bipartite graphs, since there are no edges between two sets on the same side. Moreover, even if the sets are on distinct sides, the minimum degree condition is only $(1/2+\eps)n$. Since the bipartite graph has $2n$ vertices, this translates to $(1/4+\eps/2)n$ for an $n$-vertex graph
and is too weak to employ the non-bipartite result.
Thus, the proof needs a genuinely bipartite, parity-aware booster mechanism. 
We utilise an idea of Bollob\'as and Kohayakawa~\cite{Bollobas-Kohayakawa}. 
An almost $2$-factor is a spanning subgraph whose components are cycles, except possibly one path. 
Instead of building longer and longer paths, we build larger and larger almost $2$-factors.
The proof consists of finding a sparse expander (\Cref{lem:sparse-expander-almost-2factor-exists}) and showing that there are booster pairs, modified for almost $2$-factors, for any sparse expander (\Cref{lem:existence-booster-pair}). 

\section{Preliminaries}
\subsection{Notation}
Let $G$ be a graph. 
We write $e(G)$ for $|E(G)|$. For two disjoint sets of vertices $S,T\subseteq V(G)$, we write $E_G(S,T)$ for the set of edges with one end in $S$ and another in $T$.
For $X\subseteq V(G)$, let $\Gamma_G(X) = \{v\in V(G):|E_G(\{v\}, X)|\geq 1\}$ and $N_G(X) = \Gamma_G(X)\setminus X$.
For two vertices $u,v\in V(G)$, we denote their graph distance by $\dist_G(u,v)$.
We also write $N_G^t(X)$ for the set of vertices that are at distance at most $t$ from a vertex in $X$ and are not in $X$. 
For a vertex $v\in V(G)$, we denote its degree in $G$ by $\deg_G(v)$. The maximum degree of $G$ is denoted by $\Delta(G)$.

A sequence of events $(E_n)_{n\in \mathbb N}$ happens with high probability if $\lim_{n\rightarrow \infty}\Pr(E_n) = 1$. We abbreviate this as whp. We write $[n]$ for $\{1,\dots, n\}$. All logarithms are base $e$. We use the usual asymptotic notation. Let $f(n)$ and $g(n)$ be two functions. If $\lim_{n\rightarrow \infty} f(n)/g(n) = 0$, then $f(n) = o(g(n))$ and $g(n) = \omega(f(n))$. If there exists an absolute constant $C>0$ such that $|f(n)|\leq C|g(n)|$, then we write $f(n) = O(g(n))$ and $g(n) = \Omega(f(n))$. We ignore ceilings and floors for asymptotic quantities. Let $\mathbb N = \{1,2,\dots\}$.

A random variable $X$ \emph{stochastically dominates} another random variable $Y$ if for every $t\in \R$, we have $\Pr(X\geq t)\geq \Pr(Y\geq t)$.

\subsection{Asymptotic equivalence of random graph models}
Let $G$ be a graph on $n$ vertices and $p\in [0,1]$. We denote by $G_p$ the random subgraph of $G$ obtained by keeping each edge of $G$ with probability $p$ independently. Also, let $0\leq m\leq e(G)$. We denote by $G_m$ the random subgraph of $G$ obtained by choosing uniformly among all subgraphs of $G$ with $m$ edges. For monotone graph properties, we can relate $G_p$ and $G_m$ as follows:
\begin{proposition}[\cite{random-graph-book}]\label{prop:model-equiv}
    Let $\delta > 0$.
    Let $G$ be a graph on $n$ vertices and $\mathcal P$ be a monotone graph property. Let $0\leq m\leq (1-\delta) e(G)$ and $p = m/e(G)$. If $G_p\in \calP$ whp then $G_m\in \calP$ whp.
\end{proposition}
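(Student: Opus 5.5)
The statement to prove is \Cref{prop:model-equiv}. The plan is to couple $G_m$ with $G_p$ through the number of edges, and use monotonicity to pass from $G_p$ to $G_m$. First observe that, conditional on $e(G_p)=k$, the random graph $G_p$ is distributed exactly as $G_k$, since every $k$-edge subgraph of $G$ receives probability $p^k(1-p)^{e(G)-k}$. Writing $N=e(G)$, this gives
\[
\Pr(G_p\notin\calP)=\sum_{k=0}^{N}\Pr(\Bin(N,p)=k)\,\Pr(G_k\notin\calP).
\]

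Next we use monotonicity. Assume $\calP$ is monotone increasing; the decreasing case is symmetric, with the roles of $k\ge m$ and $k\le m$ swapped. Take the standard coupling in which a uniformly random ordering of $E(G)$ is fixed and $G_k$ is defined as the first $k$ edges. Then $G_k\subseteq G_{k'}$ for $k\le k'$, so $\Pr(G_k\notin\calP)$ is nonincreasing in $k$. Hence for every $k\le m$ we have $\Pr(G_k\notin\calP)\ge \Pr(G_m\notin\calP)$, and therefore
\[
\Pr(G_p\notin\calP)\ \ge\ \Pr(\Bin(N,p)\le m)\,\Pr(G_m\notin\calP).
\]

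It remains to show that $\Pr(\Bin(N,p)\le m)$ is bounded below by a positive constant that does not depend on $n$. Since $m=Np$ is the mean of $\Bin(N,p)$, and an integer mean is a median of the binomial distribution, we get $\Pr(\Bin(N,p)\le m)\ge 1/2$. This yields $\Pr(G_m\notin\calP)\le 2\Pr(G_p\notin\calP)\to 0$.

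The one point needing care is the degenerate regime, where $m$ is tiny or the fraction $p$ is tiny. The median fact covers it uniformly, so no separate case analysis is needed. The hypothesis $m\le(1-\delta)e(G)$ ensures $p\le 1-\delta$. It is only needed for the analogous two-sided statement, where one compares with $\Bin(N,p)\ge m$ and uses $\Var\ge \delta Np$ to get a constant-probability anti-concentration bound. If one prefers not to invoke the median fact, the fallback is a central limit theorem or Berry--Esseen argument when $Np(1-p)\to\infty$, together with a direct computation when $Np$ stays bounded.
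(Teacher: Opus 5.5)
Your proof is correct, and it is the standard argument behind the cited textbook result (the paper quotes the proposition without proof): decompose $G_p$ according to $e(G_p)$, use the nested coupling of \Cref{prop:monotonicity-m} to make $\Pr(G_k\notin\calP)$ monotone in $k$, and bound the relevant binomial tail at its integer mean $m=Np$ below by $1/2$ via the mean-equals-median fact. That same fact also gives $\Pr(\Bin(N,p)\ge m)\ge 1/2$, so the decreasing case (the one the paper uses for $G^+$ in \Cref{cor:monotonicity}) goes through just as you describe. Consequently the hypothesis $m\le(1-\delta)e(G)$ is never needed in either direction, and your closing remark assigning it a role there is unnecessary but does not affect the proof.
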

Note that the hitting time is inherently a notion about $G_m$. This lemma allows us to work with $G_p$, which is often more convenient.  

Here is a lemma that works for the converse direction:
\begin{proposition}[\cite{random-graph-book}]\label{prop:Gm-to-Gp} 
    Let $G$ be a graph on $n$ vertices, $p\in [0,1]$ and $\calP$ a graph property. If for every $m = e(G)\cdot p+ O(\sqrt{e(G)p(1-p})$, we have $G_m\in \calP$ whp, then $G_p\in \calP$ whp.
\end{proposition}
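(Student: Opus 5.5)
The plan is to condition on the number of edges of $G_p$ and reduce to the uniform model $G_m$. Write $N = e(G)$ and let $X = e(G_p)$, so $X\sim \Bin(N,p)$. The key observation is that, conditional on $X = m$, the graph $G_p$ is uniformly distributed among all subgraphs of $G$ with exactly $m$ edges. Indeed, every fixed such subgraph $H$ has probability $p^m(1-p)^{N-m}$, which does not depend on $H$. Hence conditionally on $\{X=m\}$, $G_p$ has the law of $G_m$. By the law of total probability,
\[
\Pr(G_p\notin\calP) = \sum_{m=0}^{N} \Pr(X=m)\,\Pr(G_m\notin \calP).
\]
Note that no monotonicity of $\calP$ is needed for this identity, which matches the statement.

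Next I split the sum according to whether $m$ lies in the window $W_C = \{m : |m - Np|\le C\sqrt{Np(1-p)}\}$ for a constant $C>0$. Since $\Var(X) = Np(1-p)$, Chebyshev's inequality gives $\Pr(X\notin W_C)\le 1/C^2$. Therefore
\[
\Pr(G_p\notin\calP)\le \frac{1}{C^2} + \max_{m\in W_C}\Pr(G_m\notin\calP).
\]
If $Np(1-p)=0$, then $X$ is deterministic and the claim is immediate, so we may assume $Np(1-p)>0$.

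The only delicate point is interpreting the hypothesis uniformly over the window; this is the main (mild) obstacle. I read ``for every $m = Np + O(\sqrt{Np(1-p)})$, $G_m\in\calP$ whp'' as saying that for every fixed $C$ and every sequence $m=m(n)\in W_C$, we have $\Pr(G_{m(n)}\notin\calP)\to 0$. For each $n$, choose $m^*(n)\in W_C$ attaining the maximum above; this is possible because $W_C$ is a finite set. Then $m^*(n)$ is one admissible sequence, so the maximum tends to $0$. Consequently $\limsup_n \Pr(G_p\notin\calP)\le 1/C^2$. Since $C$ is arbitrary, letting $C\to\infty$ yields $\Pr(G_p\notin\calP)\to0$, that is, $G_p\in\calP$ whp.
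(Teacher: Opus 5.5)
Your proof is correct and is essentially the standard argument: the paper gives no proof of this proposition and simply cites \cite{random-graph-book}, where it is proved exactly as you do, by conditioning on $e(G_p)\sim\Bin(e(G),p)$ (so that $G_p$ is conditionally distributed as $G_m$), restricting to the Chebyshev window $|m-e(G)p|\le C\sqrt{e(G)p(1-p)}$, taking the worst $m$ in that window as one admissible sequence, and finally letting $C\to\infty$. Your reading of the $O(\cdot)$ hypothesis as ranging over all sequences in the window is the right one. Your separate treatment of the degenerate case is also fine, though it is not needed: when $e(G)p(1-p)=0$ one has $W_C=\{e(G)p\}$ and $\Pr(X\notin W_C)=0$, so the same bound covers it.
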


For monotone increasing properties, the more edges the graph has, the more likely the property is to hold. We note the following simple fact:
\begin{proposition}\label{prop:monotonicity-m}
    Let $G$ be a graph and $m_1\leq t\leq m_2$. 
    There exists a coupling between $G_{m_1}, G_t$ and $G_{m_2}$ such that with probability $1$, $G_{m_1}\subseteq G_t\subseteq G_{m_2}$. 
\end{proposition}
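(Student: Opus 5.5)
The plan is to build the coupling directly from the random ordering that defines the random subgraph process. Fix a uniformly random ordering $(e_1,\dots,e_{e(G)})$ of $E(G)$ and set $H_m = \{e_1,\dots,e_m\}$ for every $0\le m\le e(G)$. The first step is to check that each $H_m$ has the distribution of $G_m$. The ordering is uniform, so the unordered set of its first $m$ entries is uniform among all $m$-subsets of $E(G)$. To see this, note that each $m$-subset arises as the prefix set of exactly $m!\,(e(G)-m)!$ orderings, a number that does not depend on the subset.

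The second step is to define the coupling as $(G_{m_1},G_t,G_{m_2}) := (H_{m_1},H_t,H_{m_2})$. By the first step, each coordinate has the correct marginal distribution. Since $m_1\le t\le m_2$, the sets of first $m_1$, first $t$ and first $m_2$ edges are nested prefixes of the same sequence. This gives $H_{m_1}\subseteq H_t\subseteq H_{m_2}$ for every realisation of the ordering, hence with probability $1$.

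No step is a real obstacle; the only point needing care is the counting argument showing that a prefix of a uniform ordering is a uniform $m$-subset. As a consequence, for any monotone increasing property $\calP$ we get $\Pr(G_{m_1}\in\calP)\le\Pr(G_t\in\calP)\le\Pr(G_{m_2}\in\calP)$. This is presumably how the proposition will be used, together with \Cref{prop:model-equiv} and \Cref{prop:Gm-to-Gp}, to sandwich $\tau_2$ between deterministic values of $m$.
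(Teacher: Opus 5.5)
Your proof is correct and takes essentially the same approach as the paper. The paper draws i.i.d.\ uniform labels $U_e\in[0,1]$ and takes the $k$ edges with smallest labels; these labels almost surely induce exactly your uniform ordering. Your counting of the orderings with a given prefix set supplies the uniformity step that the paper only asserts.
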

\begin{proof}
    For each edge $e\in E(G)$, assign an independent random variable $U_e$ that is uniformly distributed on $[0,1]$. Order them so that $U_{e_1}\leq U_{e_2}\leq \cdots \leq U_{e(G)}$. 
    Let $E_k = \{U_{e_1},\dots, U_{e_k}\}$. Observe that $G_{m_1}$ is distributed as a random subgraph of $G$ with $E_{m_1}$, $G_t$ with $E_{t}$ and $G_{m_2}$ with $E_{m_2}$. 
    Clearly, $G_{m_1}\subseteq G_t\subseteq G_{m_2}$ with probability $1$.
\end{proof}

\subsection{Basic estimates}
We will make frequent use of the following estimates:
\begin{proposition}\label{prop:estimates}
    Let $n,s,t\in \N$ and $p \in (0,1)$.
    \begin{enumerate}
        \item $\binom{n}{t}\leq (en/t)^t$.
        \item $1-p\leq \exp(-p)$.
        \item $\binom{n-s}{t}\leq \binom{n}{t}\cdot \exp(-st/n)$.
        \item $\binom{n-s}{t-s}\leq \binom{n}{t}(t/n)^s$.
    \end{enumerate}
\end{proposition}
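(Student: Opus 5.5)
We prove the four estimates one at a time, using only elementary inequalities. Throughout we assume $t\le n$ in (1) and (3), and $s\le t\le n$ in (4); otherwise the left-hand side is zero, or the binomial is interpreted as zero, and there is nothing to prove.

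For (1), we write $\binom{n}{t}\le n^t/t!$. It then suffices to show $t!\ge (t/e)^t$. This follows from $e^t=\sum_{k\ge 0} t^k/k!\ge t^t/t!$, and combining the two gives $\binom{n}{t}\le (en/t)^t$.

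For (2), we use convexity of $x\mapsto e^{-x}$. The tangent line at $0$ gives $e^{-p}\ge 1-p$ for every real $p$, in particular for $p\in(0,1)$.

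For (3), we expand the ratio as a product:
\[
\frac{\binom{n-s}{t}}{\binom{n}{t}}=\prod_{i=0}^{t-1}\frac{n-s-i}{n-i}=\prod_{i=0}^{t-1}\Bigl(1-\frac{s}{n-i}\Bigr).
\]
Each factor satisfies $1-\frac{s}{n-i}\le 1-\frac{s}{n}\le \exp(-s/n)$, using $n-i\le n$ and then (2) with $p=s/n$. If $s\ge n$, the left side is $0$ for $t\ge1$, and the case $t=0$ is trivial. Multiplying the $t$ factors gives $\exp(-st/n)$.

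For (4), we again expand the ratio:
\[
\frac{\binom{n-s}{t-s}}{\binom{n}{t}}=\frac{(n-s)!\,t!\,(n-t)!}{(t-s)!\,(n-t)!\,n!}=\frac{t(t-1)\cdots(t-s+1)}{n(n-1)\cdots(n-s+1)}=\prod_{j=0}^{s-1}\frac{t-j}{n-j}.
\]
Since $t\le n$, each factor satisfies $\frac{t-j}{n-j}\le \frac{t}{n}$; cross-multiplying, this is equivalent to $-jn\le -jt$. The product is therefore at most $(t/n)^s$.

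None of these steps is a real obstacle. The only care needed is with the degenerate ranges of $s$ and $t$, which we handle by the convention that such binomials vanish.
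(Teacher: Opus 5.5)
Your proof is correct. The paper gives no proof of this proposition and simply lists these as standard estimates. Your arguments are the standard ones: $t!\ge (t/e)^t$ from a single term of the series for $e^t$, the tangent-line bound for $e^{-x}$, and termwise comparison of the product expansions in (3) and (4).

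One small point of care in (3): you only assume $t\le n$. When $s<n$ but $n-s<t\le n$, some factors $1-\frac{s}{n-i}$ are negative, so multiplying the termwise upper bounds is not literally justified. However, the factor with $i=n-s$ vanishes, so both the product and $\binom{n-s}{t}$ equal $0$ and the inequality is trivial there. State explicitly that the termwise argument is used only for $t\le n-s$, where all factors are positive.
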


\subsection{Probabilistic inequalities}
We will use the following simple fact:
\begin{proposition}\label{prop:basic-inequality}
    Let $X\sim \Bin(n,p)$. For any $t > 0$, we have $\Pr(X\geq t)\leq \binom{n}{t}p^t$. For any $t< np$, we have $\Pr(X\leq t)\leq t\binom{n}{t}p^t(1-p)^{n-t}$.
\end{proposition}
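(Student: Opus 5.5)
The upper tail is a union bound over $t$-subsets of the $n$ trials, and the lower tail sums the point probabilities and bounds the largest one.

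For the upper tail, let $X=\sum_{i=1}^n \xi_i$ with independent $\xi_i\sim\Ber(p)$, and put $k=\lceil t\rceil$. The first claim is trivial when $k>n$, so assume $k\le n$. If $X\geq t$, then at least $k$ of the $\xi_i$ equal $1$. In particular there is a set $S\subseteq[n]$ with $|S|=k$ and $\xi_i=1$ for all $i\in S$. For a fixed $S$, independence gives probability exactly $p^{k}$. A union bound over the $\binom{n}{k}$ choices of $S$ gives $\Pr(X\geq t)\leq \binom{n}{k}p^{k}$, which is the stated bound with $t$ read as an integer, following the paper's convention of ignoring ceilings.

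For the lower tail, take $t<np$ an integer; otherwise replace $t$ by $\lfloor t\rfloor$, which gives the same event. Write
\[
\Pr(X\le t)=\sum_{k=0}^{t} b_k, \qquad b_k=\binom{n}{k}p^k(1-p)^{n-k}.
\]
The key step is to show that $b_k$ is nondecreasing on $0\le k\le t$. The ratio of consecutive terms is
\[
\frac{b_k}{b_{k-1}}=\frac{(n-k+1)p}{k(1-p)},
\]
and this is at least $1$ exactly when $k\le (n+1)p$. Since $k\le t<np\le (n+1)p$, the condition holds for all $1\le k\le t$. Hence every term is at most $b_t$, and
\[
\Pr(X\le t)\le (t+1)\,b_t.
\]

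The only real obstacle is the factor $t+1$ versus the stated $t$. Two routes remove it.

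\begin{enumerate}
\item The case $t=0$ is degenerate: the bound $t\binom{n}{t}p^t(1-p)^{n-t}$ then equals $0$, while $\Pr(X\le 0)=(1-p)^n>0$. So the claim must be read for $t\ge1$.
\item For $t\ge1$, the crude bound $t+1\le 2t$ already suffices for every asymptotic application in the paper.
\item To get exactly $t$ instead, use that $b_0\le b_1$: the ratio $b_1/b_0=np/(1-p)\ge1$ because $t\ge1$ forces $np>1$. Then $b_0+b_1\le 2b_1\le 2b_t$, which absorbs one extra term and leaves $t$ terms each at most $b_t$, so $\Pr(X\le t)\le t\,b_t$.
\end{enumerate}

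Either route yields the stated inequality up to the harmless constant, with no use of the other propositions needed.
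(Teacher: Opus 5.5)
Your upper-tail union bound and your monotonicity argument for the lower tail are the same as the paper's; your ratio computation just verifies the monotonicity of $\Pr(\Bin(n,p)=\ell)$ for $\ell\le np$ that the paper quotes. The bound $\Pr(X\le t)\le (t+1)\,b_t$ you obtain is correct.

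The problem is item 3 of your list. Bounding $b_0+b_1\le 2b_t$ replaces two summands by two copies of $b_t$, so after this grouping you still have
\[
2b_t+(t-1)b_t=(t+1)\,b_t .
\]
Treating the pair $\{b_0,b_1\}$ as a single ``term'' does not lower the multiplier.

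No other argument can give the factor $t$ either, because the inequality as literally stated is false. For $t=1<np$ it reads $b_0+b_1\le b_1$, i.e.\ $(1-p)^n\le 0$, which fails whenever $p<1$. Concretely, $n=2$, $p=0.6$, $t=1$ gives $\Pr(X\le 1)=0.64>0.48=t\binom{n}{t}p^t(1-p)^{n-t}$. Similar failures occur at $t=2$ with $np$ just above $2$. So your remark in item 1 that one must assume $t\ge 1$ does not rescue the exact statement.

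The paper's own proof makes the same off-by-one slip: it says ``at most $t$ times'' the largest term while summing the $t+1$ terms $\ell=0,\dots,t$. You should drop item 3 and state the lower tail as
\[
\Pr(X\le t)\le (t+1)\binom{n}{t}p^t(1-p)^{n-t},
\]
or as $\le 2t\binom{n}{t}p^t(1-p)^{n-t}$ for $t\ge 1$. This is what your argument, and the paper's, actually proves. It is also all that the only application, \Cref{lem:small-property}(2), needs: there $t=\xi\log n$ and any polynomial-in-$t$ prefactor is absorbed into $n^{o(1)}$.
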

\begin{proof}
    For the former, let the $X_i$ be independent $\Ber(p)$ random variables such that $X = \sum_i X_i$.
    Note that $X\geq t$ implies that there exists $I\subseteq [n]$ such that $X_i = 1$ for all $i\in I$, which, by the union bound, happens with probability at most $\binom{n}{t}p^t$.
    
    For the latter, recall that for any $\ell \leq np$, the quantity $p_\ell = \Pr(\Bin(n,p) = \ell)$ is increasing in $\ell$. It follows that $\Pr(X\leq t)$ is at most $t$ times the term maximizing $p_\ell$ under $\ell\leq t\leq np$, namely $p_t = \binom{n}{t}p^t(1-p)^{n-t}$.
\end{proof}
Recall multiplicative Chernoff's bound:
\begin{proposition}\label{prop:chernoff}
    Let $X\sim \Bin(n,p)$. For any $\eps\in (0,1)$, we have 
    \begin{align*}
        &\Pr(X\geq (1+\eps)\E[X]) \leq \exp(-\eps^2\E[X]/3)\,,\\
        &\Pr(X\leq (1-\eps)\E[X]) \leq \exp(-\eps^2\E[X]/2)\,.
    \end{align*}
\end{proposition}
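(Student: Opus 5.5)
We prove the two bounds separately. Both follow from the standard Chernoff argument: bound the moment generating function of $X$ and apply Markov's inequality to an exponential. Write $X=\sum_{i=1}^n X_i$ with the $X_i$ independent $\Ber(p)$ variables, and set $\mu=\E[X]=np$. For any $\lambda\in\R$, independence gives
\[
\E[e^{\lambda X}]=\prod_{i=1}^n\bigl(1+p(e^\lambda-1)\bigr)\le \exp\bigl(\mu(e^\lambda-1)\bigr),
\]
where the inequality uses $1+x\le e^x$, the same estimate as in \Cref{prop:estimates}(2).

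\textbf{Upper tail.} Take $\lambda=\log(1+\eps)>0$. Markov's inequality applied to $e^{\lambda X}$ gives
\[
\Pr\bigl(X\ge(1+\eps)\mu\bigr)\le \exp\bigl(\mu\,[\eps-(1+\eps)\log(1+\eps)]\bigr).
\]
It then remains to prove the elementary inequality $f(\eps):=(1+\eps)\log(1+\eps)-\eps-\eps^2/3\ge 0$ for $\eps\in(0,1)$. We have $f(0)=0$ and $f'(\eps)=\log(1+\eps)-2\eps/3$. This derivative satisfies $f'(0)=0$ and is concave, since $f''(\eps)=1/(1+\eps)-2/3$ is decreasing. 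Evaluating at the right endpoint, $f'(1)=\log 2-2/3>0$. A concave function that is nonnegative at both endpoints of $[0,1]$ is nonnegative on the whole interval, so $f'\ge 0$ there. Hence $f$ is increasing and $f\ge 0$.

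\textbf{Lower tail.} Take $\lambda=\log(1-\eps)<0$ and apply Markov's inequality to $e^{\lambda X}$; since $\lambda<0$, the event $X\le(1-\eps)\mu$ is the event $e^{\lambda X}\ge e^{\lambda(1-\eps)\mu}$. This gives
\[
\Pr\bigl(X\le(1-\eps)\mu\bigr)\le\exp\bigl(\mu\,[-\eps-(1-\eps)\log(1-\eps)]\bigr).
\]
It remains to show $g(\eps):=(1-\eps)\log(1-\eps)+\eps-\eps^2/2\ge 0$. We have $g(0)=0$ and $g'(\eps)=-\log(1-\eps)-\eps$. Since $-\log(1-\eps)=\sum_{k\ge1}\eps^k/k\ge\eps$, we get $g'\ge 0$, so $g\ge 0$.

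The only step requiring care is the calculus inequality for the upper tail. The constant $1/3$ is not tight at small $\eps$ (the exact exponent there is $\eps^2/2$), so the check has to cover the whole interval. The concavity of $f'$ handles this cleanly.
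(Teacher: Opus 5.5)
Your proof is correct. It is the standard moment-generating-function argument: bound $\E[e^{\lambda X}]\le \exp(\mu(e^\lambda-1))$, apply Markov with $\lambda=\log(1\pm\eps)$, then verify the two elementary inequalities, and both calculus checks go through, including the concavity argument for $f'$ on $[0,1]$ with $f'(1)=\log 2-2/3>0$. The paper does not prove this proposition; it only recalls it as the standard multiplicative Chernoff bound, so your derivation is the textbook proof that the paper relies on without stating.
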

For very large deviations, here is a convenient form:
\begin{proposition}\label{prop:chernoff-large}
    Let $X\sim \Bin(n,p)$. Then, $\Pr(X\geq t)\leq 2^{-t}$ for $t\geq 2e\cdot \E[X]$.
\end{proposition}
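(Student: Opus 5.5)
The plan is to reduce \Cref{prop:chernoff-large} to the union-bound estimate of \Cref{prop:basic-inequality}, combined with the binomial coefficient bound from \Cref{prop:estimates}. Write $\mu=\E[X]=np$ and fix $t\geq 2e\mu$. We may assume $t\leq n$, since otherwise $\Pr(X\geq t)=0$. We may also take $t$ to be an integer: for non-integer $t$, the event $X\geq t$ is the same as $X\geq \lceil t\rceil$, and since $\lceil t\rceil\geq t$, any bound of the form $2^{-\lceil t\rceil}$ is at most $2^{-t}$.

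First, \Cref{prop:basic-inequality} gives $\Pr(X\geq t)\leq \binom{n}{t}p^t$. Next, part (1) of \Cref{prop:estimates} gives $\binom{n}{t}\leq (en/t)^t$. Combining these,
\begin{align*}
\Pr(X\geq t)\leq \left(\frac{enp}{t}\right)^t=\left(\frac{e\mu}{t}\right)^t\,.
\end{align*}
Since $t\geq 2e\mu$, the base satisfies $e\mu/t\leq 1/2$. Therefore the right-hand side is at most $2^{-t}$, which is the claimed bound.

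The only point needing care is the integrality of $t$, handled above by replacing $t$ with $\lceil t\rceil$. There is a little more to check there: $\lceil t\rceil\geq t\geq 2e\mu$ still holds, so the argument applies to $\lceil t\rceil$, and \Cref{prop:estimates}(1) is then applied only to integer arguments. The case $p=0$ is trivial, since then $X=0$ almost surely. I do not expect any genuine obstacle here; this is a routine computation.
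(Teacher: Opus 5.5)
Your proof is correct. The paper does not prove this statement; it quotes it as a standard large-deviation form of Chernoff's bound. The usual derivation goes through the moment generating function: with $\mu=\E[X]$ one gets $\Pr(X\geq t)\leq e^{-\mu}(e\mu/t)^t$, and then $e\mu/t\leq 1/2$ finishes.

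You reach the same key bound $(e\mu/t)^t$ combinatorially instead, using the union bound $\Pr(X\geq t)\leq\binom{n}{t}p^t$ from \Cref{prop:basic-inequality} together with $\binom{n}{t}\leq (en/t)^t$ from \Cref{prop:estimates}(1). This keeps the argument self-contained within the paper's own preliminaries and avoids exponential moments entirely. The MGF route gives an extra factor $e^{-\mu}$ and also covers sums of general $[0,1]$-valued variables, but nothing is lost for the statement as written.

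Your treatment of the edge cases is right. Passing to $\lceil t\rceil$ is legitimate because $X$ is integer-valued, $\lceil t\rceil\geq t\geq 2e\mu$ still holds, and $2^{-\lceil t\rceil}\leq 2^{-t}$. The cases $t>n$ and $p=0$ are trivial, as you say.
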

Finally, we need a hypergeometric Janson inequality. See, for instance,~\cite[Lemma 3.1]{Balogh-Morris-Samotij-Warnke}.
\begin{proposition}\label{prop:janson}
    Suppose $\{B_i\}_{i\in I}$ is a family of subsets of an $n$-element set $\Omega$, let $m\in \{0,\dots, n\}$, and let $p = m/n$. Let $\mu = \sum_{i\in I}p^{|B_i|}$ and $\Delta = \sum_{i\sim j} p^{|B_i\cup B_j|}$, where the second sum is over all ordered pairs $(i,j)\in I^2$ such that $i\neq j$ and $B_i\cap B_j \neq \emptyset$. 
    Let $R$ be a uniformly chosen random $m$-set of $\Omega$, and let $\calB$ denote the event that $B_i\nsubseteq R$ for all $i\in I$. Then, for every $q\in [0,1]$, 
    \[
        \Pr(\calB)\leq 2\cdot \exp(-q\mu + q^2\Delta/2)\,.
    \]
\end{proposition}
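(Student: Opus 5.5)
The plan is to deduce the hypergeometric statement from the classical binomial Janson inequality. The factor $2$ in the bound is the price of conditioning on the size of a binomial random set. Two ingredients are needed: a Janson bound for the binomial random set $\Omega_p$ that carries the extra parameter $q$, and a monotone comparison between $\Omega_p$ and the uniform $m$-set $R$.

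\emph{Step 1 (binomial Janson with the parameter $q$).} For any subfamily $J\subseteq I$, let $\calB_J$ be the event that $B_i\nsubseteq \Omega_p$ for all $i\in J$. Since $\calB\subseteq \calB_J$, the standard Janson inequality gives
\[
\Pr(\calB \text{ for } \Omega_p)\leq \Pr(\calB_J)\leq \exp(-\mu_J+\Delta_J/2),
\]
where $\mu_J$ and $\Delta_J$ are the analogues of $\mu$ and $\Delta$ restricted to $J$. Now choose $J$ at random, keeping each index independently with probability $q$. Then $\E[\mu_J]=q\mu$, and $\E[\Delta_J]=q^2\Delta$, because every pair in $\Delta$ consists of two distinct indices. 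Hence some $J$ satisfies $-\mu_J+\Delta_J/2\leq -q\mu+q^2\Delta/2$. This yields
\[
\Pr(\calB\text{ for }\Omega_p)\leq \exp(-q\mu+q^2\Delta/2).
\]

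\emph{Step 2 (monotone comparison).} The event $\calB$ is decreasing. Conditioned on $|\Omega_p|=k$, the set $\Omega_p$ is a uniform $k$-set. Let $f(k)$ be the probability of $\calB$ for a uniform $k$-set. Adding a uniformly random new element to a uniform $k$-set gives a uniform $(k+1)$-set, and this coupling shows that $f$ is nonincreasing in $k$. Therefore
\[
\Pr(\calB\text{ for }\Omega_p)\geq \sum_{k\geq m}\Pr(|\Omega_p|=k)f(k)\geq \Pr(\Bin(n,p)\geq m)\cdot \Pr(\calB \text{ for } R).
\]

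\emph{Step 3 (median).} Since $np=m$ is an integer, the median of $\Bin(n,p)$ equals $m$ (the Kaas--Buhrman result), so $\Pr(\Bin(n,p)\geq m)\geq 1/2$. Combining Steps 1--3 gives $\Pr(\calB)\leq 2\exp(-q\mu+q^2\Delta/2)$.

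The only nonroutine point is the median fact in Step 3. If one wants to avoid citing it, one can instead use a slightly larger $p'=p(1+o(1))$ together with Chernoff (\Cref{prop:chernoff}). This costs only a negligible change in $\mu$ and $\Delta$, but the clean constant $2$ is most naturally obtained from the exact median statement.
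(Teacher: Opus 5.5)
Your overall route is the standard proof of this lemma; the paper does not prove it but cites Balogh--Morris--Samotij--Warnke. It consists of Janson's inequality for the binomial set $\Omega_p$, a random $q$-subfamily to bring in the parameter $q$, a monotone coupling, and the median of $\Bin(n,p)$. Step~1 is correct, and your coupling correctly shows that $f$ is nonincreasing. However, the second inequality in Step~2 then goes the wrong way. For $k\ge m$ you have $f(k)\le f(m)$, so in fact
\[
\sum_{k\geq m}\Pr(|\Omega_p|=k)f(k)\le \Pr(\Bin(n,p)\geq m)\, f(m),
\]
which gives no upper bound on $f(m)=\Pr(\calB)$. Since $\calB$ is decreasing, larger sets are less likely to satisfy it. So an upper bound for $\Omega_p$ restricted to the event $|\Omega_p|\ge m$ cannot be transferred down to the uniform $m$-set.

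The repair uses the same ingredients. Keep the terms with $k\le m$ instead:
\[
\Pr(\calB\text{ for }\Omega_p)\ \ge\ \sum_{k\le m}\Pr(|\Omega_p|=k)f(k)\ \ge\ \Pr(\Bin(n,p)\le m)\, f(m).
\]
Then use $\Pr(\Bin(n,p)\le m)\ge 1/2$, which the Kaas--Buhrman fact also supplies. When $np=m$ is an integer, $m$ is a median, so both $\Pr(\Bin(n,p)\le m)$ and $\Pr(\Bin(n,p)\ge m)$ are at least $1/2$. With this change the argument is complete.

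Your fallback remark has the same sign error. You would need a slightly \emph{smaller} $p'$, so that $|\Omega_{p'}|\le m$ with good probability. Even then, this only gives an approximate inequality with $\mu$ and $\Delta$ evaluated at $p'$, and it gives no concentration at all for bounded $m$. It does not yield the exact bound with constant $2$.
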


\subsection{2-factors and expanders}\label{sec:2-factor}
Let $G$ be a connected graph.
A \emph{$2$-factor} $H\subseteq G$ is a spanning 2-regular subgraph of $G$. For a bipartite graph $G$, there are no odd cycles, and thus a $2$-factor is simply a vertex-disjoint union of cycles.
For technical reasons, we need a notion of almost $2$-factors.
An \emph{almost $2$-factor} is a spanning subgraph of $G$ whose components are cycles, except possibly one path.

We introduce the definition of a bipartite expander:
\begin{definition}
    Let $\alpha \in (0,1)$ and $G$ be a bipartite graph on $A\cup B$ and $H\subseteq G$. We say $H$ is a \emph{$(\alpha n, 2)$-expander} if for every $X\subseteq A$ with $|X|\leq \alpha n$, we have $|N_H(X)|\geq 2|X|$ and for every $Y\subseteq B$ with $|Y|\leq \alpha n$, we have $|N_H(Y)|\geq 2|Y|$.
\end{definition}
Let $G$ be a bipartite graph on $A\cup B$ and $S\subseteq A$. Let $\Gamma^{=1}(S) = \{v\in B: |E(v,S)| = 1\}$ and $\Gamma^{\geq 2}(S) = \{v\in B: |E(v,S)| \geq 2\}$. Similarly, when $S\subseteq B$, define $\Gamma^{=1}(S)$ and $\Gamma^{\geq 2}(S)$ with $B$ replaced by $A$.

The following is a necessary and sufficient condition for the containment of a $2$-factor in bipartite graphs (see e.g.~\cite[Lemma 5]{Bollobas-Kohayakawa}):
\begin{proposition}\label{prop:contain-2-factor}
    Let $G$ be a bipartite graph on $A\cup B$.
    Then 
    \begin{align*}
        \text{$G$ contains a $2$-factor} \quad\quad &\text{if and only if} \quad \text{for all $S\subseteq A$, we have $|\Gamma^{\geq 2}(S)| + \frac{1}{2}|\Gamma^{=1}(S)| \geq |S|$ }\\
        &\text{if and only if} \quad\text{for all $T\subseteq B$, we have $|\Gamma^{\geq 2}(T)| + \frac{1}{2}|\Gamma^{=1}(T)| \geq |T|$.}
    \end{align*}
\end{proposition}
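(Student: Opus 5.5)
The plan is to reduce the statement to a perfect matching problem via the standard vertex-splitting construction and then apply a deficiency (Hall-type) version of Hall's theorem. I treat the condition for $S\subseteq A$; the one for $T\subseteq B$ is symmetric. Since a $2$-factor is $2$-regular and $G$ is bipartite, it exists only if $|A|=|B|$. I will assume this throughout; in the paper it holds because $G$ is balanced, and both conditions also force it, as explained below.

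Necessity is a direct count. Suppose $F$ is a $2$-factor and $S\subseteq A$. In $F$ the set $S$ sends exactly $2|S|$ edges into $B$. A vertex of $\Gamma^{=1}(S)$ receives at most one of them, and a vertex of $\Gamma^{\geq 2}(S)$ receives at most two, since its $F$-degree is $2$. Hence $2|S|\le |\Gamma^{=1}(S)|+2|\Gamma^{\geq 2}(S)|$, which is the desired inequality.

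For sufficiency I use a $b$-matching formulation. A $2$-factor of a bipartite graph is exactly a spanning subgraph $F$ with every degree equal to $2$; no multi-edges can arise because $G$ is simple. Also, taking $S=A$ in the condition gives $|B|\ge |\Gamma^{\ge2}(A)|+\tfrac12|\Gamma^{=1}(A)|\ge |A|$, and the condition for $T=B$ gives the reverse inequality, so $|A|=|B|$ whenever both conditions hold.

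Build an auxiliary bipartite graph $G'$ by replacing each $a\in A$ by two copies $a^1,a^2$ and each $b\in B$ by two copies $b^1,b^2$. Join $a^i$ to $b^j$ whenever $ab\in E(G)$, for all $i,j$. A perfect matching of $G'$ then gives every vertex of $G$ degree $2$, but it may use the same edge $ab$ twice (as $a^1b^1$ and $a^2b^2$), which would correspond to a double edge. To avoid this, I would instead apply a flow argument. Take a network with source $s$ joined to each $a\in A$ with capacity $2$, each edge $ab$ directed from $a$ to $b$ with capacity $1$, and each $b$ joined to the sink $t$ with capacity $2$. A $2$-factor is exactly an integral flow of value $2|A|=2|B|$, and by integrality of max-flow it suffices to show that every cut has capacity at least $2|A|$.

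Take a cut given by $S\subseteq A$ on the source side and $T\subseteq B$ on the source side. Its capacity is
\[
2|A\setminus S| + e(S, B\setminus T) + 2|T|.
\]
For fixed $S$, a vertex $b\in B$ placed on the source side contributes $2$, and placed on the sink side contributes $|E(b,S)|$. So the minimum over $T$ is $\sum_{b\in B}\min(2,|E(b,S)|)=|\Gamma^{=1}(S)|+2|\Gamma^{\geq 2}(S)|$. The min-cut condition therefore reads $2|A\setminus S|+|\Gamma^{=1}(S)|+2|\Gamma^{\geq2}(S)|\ge 2|A|$, which is exactly the hypothesis. By max-flow min-cut, an integral flow of value $2|A|$ exists. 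It saturates every source and sink edge because $|A|=|B|$, and it yields a spanning $2$-regular subgraph, which is a $2$-factor.

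For the equivalence with the $T$-condition, I run the same network with the roles of $A$ and $B$ reversed. The flow is symmetric, so the $T$-condition is likewise equivalent to the existence of a $2$-factor.

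The main care point is the cut minimisation over $T$, which is routine, and the balancedness remark above. I expect no real obstacle.
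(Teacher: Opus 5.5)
Your proof is correct. The paper gives no proof of this proposition; it only cites Lemma 5 of Bollob\'as and Kohayakawa.

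Your necessity direction is the natural double count. Your sufficiency direction is essentially Ore's bipartite $f$-factor theorem with $f\equiv 2$, proved by max-flow min-cut. The cut minimisation
\[
\min_{T\subseteq B}\bigl(2|A\setminus S|+e(S,B\setminus T)+2|T|\bigr)=2|A\setminus S|+|\Gamma^{=1}(S)|+2|\Gamma^{\geq 2}(S)|
\]
is right. Integrality together with $|A|=|B|$ gives saturation at both the source and the sink. This yields a spanning $2$-regular simple subgraph, which is a union of even cycles. The citation buys brevity; your route buys a self-contained argument and exposes a hypothesis the printed statement leaves out.

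That hypothesis matters, because the statement as written for an arbitrary bipartite graph is false. Take $K_{1,2}$ with $A=\{a\}$ and $B=\{b_1,b_2\}$. The $A$-side condition holds: for $S=A$ it reads $0+\tfrac12\cdot 2\geq 1$. Yet there is no $2$-factor, and the $B$-side condition fails, since $T=B$ gives $1\geq 2$.

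So $|A|=|B|$ is a genuine hypothesis for each individual ``if and only if'', not something a single condition forces. As you correctly note, only the conjunction of the two conditions forces balance. You are right to assume it throughout, and it holds in the paper's only application, where the graph is a spanning subgraph of the balanced $G$.

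One presentational fix: your first sentence announces a vertex-splitting plan with a Hall-deficiency theorem, and you then abandon it because of double edges. Delete that paragraph and state the flow argument directly.
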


\section{Proof}
Throughout the section, $G$ is a balanced bipartite graph on $2n$ vertices with $\delta(G) \geq (1/2+\eps)n$.
Let $\tau = \tau_2(G)$, $p_- = \log n/n$ and $p_+ = 2\log n/n$. 
Also, let $G^- = G_{p_-}$ and $G^+ = G_{p_+}$. Fix $\alpha = 1/10$.

\subsection{Estimating the range of 
\texorpdfstring{$\tau_2(G)$}{τ2(G)}}
We first control the range of $\tau$. The proof is a routine first and second moment computation. 
\begin{lemma}\label{lem:hitting-time-window}
    Whp $p_-\cdot e(G)\leq\tau\leq p_+ \cdot e(G)$.
\end{lemma}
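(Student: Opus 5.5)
We prove the two inequalities separately. Each reduces to a statement about $G_m$ for a fixed $m$, which we transfer to $G_p$.

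\textbf{Translating to fixed edge counts.} Since $\tau=\tau(\calD_2)$ and $\calD_2$ is monotone increasing, we have $\{\tau\le m\}=\{G_m\in\calD_2\}$. Here $G_m$ is the $m$-th graph of the random subgraph process, which is distributed as the uniform $m$-edge subgraph.
\begin{itemize}
\item For the upper bound, it suffices to show that $G_{m_+}\in\calD_2$ whp, where $m_+=p_+e(G)$.
\item For the lower bound, it suffices to show that $G_{m_-}\notin\calD_2$ whp, where $m_-=p_-e(G)$ (rounded down). Non-membership in $\calD_2$ is a monotone decreasing property.
\end{itemize}
Since $e(G)\ge(1/2+\eps)n^2$, the value $p_+=2\log n/n$ satisfies $m_+\le(1-\delta)e(G)$ for large $n$. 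So \Cref{prop:model-equiv} applies to $\calD_2$: it suffices to show $G_{p_+}\in\calD_2$ whp. For the lower bound, apply \Cref{prop:model-equiv} to the complement property, which is monotone increasing with respect to edge deletion. Alternatively, use \Cref{prop:monotonicity-m} and \Cref{prop:Gm-to-Gp} with a slightly larger $p$, say $p'=p_-(1+1/\log n)$, so that whp $G_{m_-}\subseteq G_{m'}$ for every $m'$ in the window around $p'e(G)$. In either case it suffices to show that $G_p$ has a vertex of degree at most $1$ whp, for $p=(1+o(1))\log n/n$ chosen slightly above $p_-$ if convenient.

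\textbf{Upper bound (first moment).} In $G_{p_+}$ each vertex $v$ has degree $\Bin(d_v,p_+)$, where $d_v\ge(1/2+\eps)n$. Then
\[
\Pr(\deg\le1)\le(1+d_vp_+)(1-p_+)^{d_v-1}\le O(\log n)\,e^{-2(1/2+\eps)\log n}=O(\log n\cdot n^{-1-2\eps}).
\]
A union bound over $2n$ vertices gives $o(1)$.

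\textbf{Lower bound (second moment).} This is the main step, because degrees in $G$ may be as large as $n$. At $p=\log n/n$, a vertex of degree $d_v=n$ has isolation probability about $n^{-1}$, so the expected count is only of order one. We use vertices of degree at most $1$ rather than isolated vertices, which gains a factor of $\log n$.

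Let $X$ be the number of vertices $v\in A$ with $\deg_{G_p}(v)\le1$. For $p\le\log n/n$ and $d_v\le n$,
\[
\Pr(\deg_{G_p}(v)\le 1)\ge d_vp(1-p)^{d_v-1}\ge(1/2)\log n\cdot e^{-\log n(1+o(1))},
\]
which is $\Omega(\log n/n)$ up to $e^{o(1)}$ factors, after checking that $p^2n=o(1)$. Hence $\E X=\Omega(\log n)\to\infty$.

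For the variance, take distinct $u,v\in A$. They are non-adjacent, so their edge sets are disjoint and the events $\{\deg u\le1\}$ and $\{\deg v\le1\}$ are independent in $G_p$. Therefore $\Var X\le\E X$, and Chebyshev's inequality gives $X\ge1$ whp.

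The only care needed is the exact choice of $p$ near $p_-$ so that the model transfer works. Taking $p=p_-(1+1/\log n)$ only changes the exponent by a factor of $e^{O(1)}$, so $\E X=\Omega(\log n)$ still holds.
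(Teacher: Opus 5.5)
Your proof is correct and follows essentially the same route as the paper: transfer to $G_{p_\pm}$ via \Cref{prop:model-equiv}, a first-moment union bound at $p_+$ using $\delta(G)\ge(1/2+\eps)n$, and a second-moment argument at $p_-$ counting vertices of degree at most $1$ on one side $A$, where independence gives $\Var X\le \E X$. The paper lower-bounds each probability by stochastic domination with $\Bin(n,p_-)$, whereas you bound the degree-one term directly, which amounts to the same thing; the only blemish is your ``alternative'' transfer, since \Cref{prop:Gm-to-Gp} goes from $G_m$ to $G_p$ (the wrong direction here), but it is unnecessary because \Cref{prop:model-equiv} applies directly to the monotone decreasing property of having a vertex of degree at most $1$, exactly as the paper uses it.
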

\begin{proof}
    By~\Cref{prop:monotonicity-m,prop:model-equiv}, it suffices to show that 
    \begin{enumerate}[label=(\alph*)]
        \item whp $G^-$ has a vertex of degree at most $1$,
        \item whp $G^+$ has no vertex of degree at most $1$.
    \end{enumerate}
    
    For each $v\in V(G)$, let $X_v^-$ be the indicator random variable of the event $\deg_{G^-}(v)\leq 1$ and $X^- = \sum_{v}X_v$. Define $X_v^+$ and $X^+$ similarly for $G^+$. 

    For (a), let us restrict our attention to one side of the bipartite graph, say $A$. Note that $X_v^-$ stochastically dominates the indicator random variable of the event $\Bin(n, p_-)\leq 1$. Thus, 
    \begin{multline*}
        \E[X_A^-] = \sum_{v\in A}\E[X_v^-] \geq n\cdot \Pr(\Bin(n, \log n/n)\leq 1) 
        = n\cdot \left((1-\log n/n)^{n} + n\cdot \log n/n\cdot (1-\log n/n)^{n-1}\right)\\ = (1+o(1))\log n\rightarrow \infty\,.
    \end{multline*}
    Because the random variables $X_v^-$ are independent for different $v\in A$, we know $\Var(X_A^-)\leq \E[X_A^-]$. It then follows by Chebyshev's inequality that (a) holds whp.

    For (b), note that $X_v^+$ is stochastically dominated by the indicator of the event $\Bin((1/2 +\eps)n,p_+)\leq 1$. Thus,
    \begin{multline*}
        \E[X^+]\leq 2n\cdot \Pr(\Bin((1/2 +\eps)n, 2\log n/n)\leq 1) = 2n\cdot \Bigl((1-2\log n/n)^{(1/2 +\eps)n} \\+ (1/2 +\eps)n\cdot 2\log n/n\cdot  (1-2\log n/n)^{(1/2+\eps)n-1}\Bigr) = n^{-2\eps + o(1)}\,.
    \end{multline*}
    By Markov's inequality, whp (b) holds.
\end{proof}

Combining~\Cref{prop:model-equiv,prop:monotonicity-m,lem:hitting-time-window}, we prove a convenient lemma that allows the computations to be done in the $G_p$ model instead of the $G_m$ model:
\begin{corollary}\label{cor:monotonicity}
    Let $\calP$ be a monotone decreasing property. Then, if $G^+\in \calP$ whp, then so is $G_\tau$. Similarly, for a monotone increasing property $\calP$, if $G^-\in \calP$ whp, then so is $G_\tau$.
\end{corollary}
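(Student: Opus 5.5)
The argument is a coupling sandwich: by \Cref{lem:hitting-time-window}, $G_\tau$ lies whp between two $G_m$'s with $m$ at the ends of the window, and \Cref{prop:model-equiv} transfers statements about $G^\pm$ to those $G_m$. Set $m_- = p_-\cdot e(G)$ and $m_+ = p_+\cdot e(G)$. Realise the random subgraph process through the uniform labels $U_e$ as in the proof of \Cref{prop:monotonicity-m}, so that $G_t$ is the graph on the $t$ edges with smallest labels. With this coupling we have $G_{m_-}\subseteq G_t\subseteq G_{m_+}$ for every $m_-\le t\le m_+$, simultaneously and with probability $1$. Let $\mathcal{E}$ be the event $m_-\le \tau\le m_+$; by \Cref{lem:hitting-time-window}, $\Pr(\mathcal{E})=1-o(1)$.

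For a monotone decreasing property $\calP$, suppose first that $G^+=G_{p_+}\in\calP$ whp. Since $p_+=2\log n/n\to 0$, we have $m_+\le (1-\delta)e(G)$ for any fixed $\delta\in(0,1)$ once $n$ is large, and $p_+=m_+/e(G)$. To apply \Cref{prop:model-equiv}, which is stated for a monotone property, note that the complement of a decreasing property is increasing. The cleanest route is to run the same proof of \Cref{prop:model-equiv} for decreasing properties; the standard argument conditions on $e(G_p)$ and uses monotonicity in $m$, and it works symmetrically. Alternatively, one can take the proposition as stated for either monotonicity direction, since ``monotone'' there is unqualified. This gives $G_{m_+}\in\calP$ whp. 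On $\mathcal{E}$ we have $G_\tau\subseteq G_{m_+}$, so by monotonicity $G_\tau\in\calP$. Hence
\[
\Pr(G_\tau\notin\calP)\le \Pr(\mathcal{E}^c)+\Pr(G_{m_+}\notin\calP)=o(1).
\]

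For a monotone increasing property the argument is symmetric. If $G^-\in\calP$ whp, then \Cref{prop:model-equiv} with $m_-$ (again $m_-\le(1-\delta)e(G)$) gives $G_{m_-}\in\calP$ whp. On $\mathcal{E}$ we have $G_{m_-}\subseteq G_\tau$, so $G_\tau\in\calP$, and the same union bound finishes the proof.

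The one point that needs care is that $\tau$ is a random index. We must not try to couple $G_\tau$ with a fixed $G_t$ for a single deterministic $t$. This is handled by using the single label coupling for all $t$ at once, so the inclusions hold pathwise and for whatever value $\tau$ takes within the window. The only other subtlety is applying \Cref{prop:model-equiv} to decreasing properties, which is the mild obstacle noted above.
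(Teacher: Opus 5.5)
Your proposal is correct and matches the argument the paper intends: the paper gives no separate proof and presents the corollary as the combination of \Cref{prop:model-equiv}, \Cref{prop:monotonicity-m} and \Cref{lem:hitting-time-window}. That combination is exactly your three steps: transfer from $G^\pm$ to $G_{m^\pm}$, the sandwich $G_{m^-}\subseteq G_\tau\subseteq G_{m^+}$ on the whp event $m^-\le\tau\le m^+$, and a union bound. Your two remarks correctly fill in details the paper leaves implicit: the nested process makes the inclusions hold pathwise for the random index $\tau$, and \Cref{prop:model-equiv} applies to decreasing properties by the same conditioning-on-$e(G_p)$ argument.
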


\subsection{Properties of \texorpdfstring{$G_\tau$}{Gτ}}
Choose $\xi = \xi(\eps) >0$ sufficiently small to satisfy various inequalities needed below.
Let $\SMA(H) = \{v\in V(H): d_{H}(v)\leq \xi \log n\}$. 
We now show that the graph $G_\tau$ satisfies the following four properties whp:

\begin{lemma}\label{lem:small-property}
        Whp, the following properties hold:
        \begin{enumerate}
            \item $\Delta(G_\tau)\leq 20\log n$,
            \item $|\SMA(G_\tau)|\leq n^{(1-\eps)/2}$,
            \item For every $u\in \SMA(G_\tau)$, there is no cycle of length at most $4$ containing $u$ in $G_\tau$,
            \item For distinct vertices $u,v\in \SMA(G_\tau)$, there is no path of length at most $4$ between them in $G_\tau$.
        \end{enumerate}
    \end{lemma}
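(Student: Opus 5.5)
We sandwich $G_\tau$ between $G^-$ and $G^+$ and prove each property in whichever model is convenient. By \Cref{lem:hitting-time-window} and the coupling of \Cref{prop:monotonicity-m}, whp $G_{p_-e(G)}\subseteq G_\tau\subseteq G_{p_+e(G)}$. Property (1) is monotone decreasing, so by \Cref{cor:monotonicity} it suffices to check it in $G^+$. There each degree is dominated by $\Bin(n,2\log n/n)$, with mean at most $2\log n$. Since $20\log n\ge 2e\cdot 2\log n$, \Cref{prop:chernoff-large} bounds the failure probability for one vertex by $2^{-20\log n}=o(n^{-1})$, and a union bound over $2n$ vertices finishes.

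Properties (2)--(4) involve $\SMA(G_\tau)$, which mixes monotonicities: being small is decreasing in edges, while containing short cycles or paths is increasing. The natural move is to note that $\SMA(G_\tau)\subseteq \SMA(G^-)$ under the coupling, while the short cycles and paths in $G_\tau$ are contained in $G^+$. So we bound the probability of configurations of the form ``vertex $u$ has degree at most $\xi\log n$ in $G^-$ and lies in a short structure in $G^+$''. Here we have to be careful, since $G^-$ and $G^+$ are coupled rather than independent, and the coupling is between $G_m$-models. A clean route is to work in the $G_p$ coupling: reveal $G^+$, then obtain $G^-$ by keeping each edge of $G^+$ with probability $1/2$. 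This transfers to $G_m$ via \Cref{prop:model-equiv} and \Cref{prop:Gm-to-Gp}, or more simply by running the coupling on uniform labels $U_e$ with thresholds $p_-,p_+$ and using that $\tau$ lies in the window.

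For (2), we bound $\Pr(u\in\SMA(G^-))\le \Pr(\Bin((1/2+\eps)n,\log n/n)\le \xi\log n)$. By \Cref{prop:basic-inequality} this is at most
\[
\xi\log n\cdot\binom{n}{\xi\log n}(\log n/n)^{\xi\log n}e^{-(1/2+\eps-o(1))\log n}\le (e/\xi)^{\xi\log n}\log n\cdot n^{-1/2-\eps+o(1)}= n^{-1/2-\eps+\xi\log(e/\xi)+o(1)}.
\]
Choosing $\xi$ so small that $\xi\log(e/\xi)<\eps/4$, the expected size of $\SMA(G^-)$ is at most $n^{1/2-3\eps/4}$. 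Markov's inequality then gives $|\SMA(G^-)|\le n^{(1-\eps)/2}$ whp, and this transfers to $G_\tau$ by monotonicity.

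For (3) and (4), fix a structure $F$ in $G$: a cycle of length at most $4$ through $u$, or a path of length at most $4$ between $u$ and $v$. There are $O(n^{k-1})$ such structures with $k$ edges. The probability that $F\subseteq G^+$ is at most $p_+^k$. Conditional on this, the degree of $u$ in $G^-$ outside $F$ still dominates $\Bin((1/2+\eps)n-2,\log n/n)$, so the smallness probability is again $n^{-1/2-\eps+\xi\log(e/\xi)+o(1)}$, and for two endpoints we get the square of this. In (3) the expected count is $O(n^{k-1}(\log n)^k)\cdot n\cdot n^{-1/2-3\eps/4}\cdot n^{-k}$. Here the factor $n$ accounts for the choice of $u$, and the cycle has $k$ edges but only $k-1$ further vertices. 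This is $n^{-1/2+o(1)}\to0$. In (4) a path with $k\le 4$ edges has $k-1$ interior vertices, so there are $O(n^2\cdot n^{k-1})$ choices. The expected count is $n^{k+1}(\log n)^k n^{-k}n^{-1-3\eps/2}=n^{-3\eps/2+o(1)}\to0$. Markov's inequality finishes both.

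The main obstacle is making this mixed-monotonicity step rigorous in the hitting-time model: justifying that ``small in $G_\tau$ and in a short structure in $G_\tau$'' implies the event in the coupled pair $(G^-,G^+)$, and that the conditional independence used above survives the coupling. The plan is to use the uniform-label coupling directly: each edge with $U_e\le p_+$ is present in $G^+$, and is in $G^-$ iff $U_e\le p_-$. Conditioning on $F\subseteq G^+$ then leaves the other edges' labels independent, which gives the required independence.
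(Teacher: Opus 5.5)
Your proposal is correct and is essentially the paper's proof. Parts (1) and (2) go through \Cref{cor:monotonicity} exactly as in the paper, and (3)--(4) use the same first-moment count (number of short structures $\times\, p_+^k \times$ smallness probability in the lower model). The only difference is that you sandwich via the uniform-label coupling $G^-\subseteq G_\tau\subseteq G^+$ instead of $G_{m^-}\subseteq G_\tau\subseteq G_{m^+}$; this sandwich holds whp directly because $G^-\notin\calD_2$ and $G^+\in\calD_2$ whp, rather than from the window $[p_-e(G),p_+e(G)]$ you cite, and it makes the conditional independence exact instead of relying on hypergeometric estimates.

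One small fix: in (4), for paths of length $3$ with $uv\in E(G)$, also discard the edge $uv$ before multiplying the two smallness probabilities. Otherwise the two degree events share that edge and are positively correlated, so the product is not an upper bound.
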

    \begin{proof}        
        By~\Cref{cor:monotonicity}, for (1), it suffices to show that $\Delta(G^+)\leq 20\log n$ whp. 
       By~\Cref{prop:chernoff-large}, $\Pr(\deg_{G^+}(v)\geq 20\log n)\leq \Pr(\Bin(n, 2\log n/n)\geq 20\log n)\leq n^{-10}$. By the union bound, whp there is no vertex of degree at least $20\log n$ in $G^+$. 
        
        By~\Cref{cor:monotonicity}, for (2), it suffices to prove $|\SMA(G^-)|\leq n^{(1-\eps)/2}$.
        The degree of each vertex in $G^-$ is distributed as $\Bin(d_{G}(v), \log n/n)$. Thus, it stochastically dominates $\Bin((1/2+\eps)n, \log n/n)$. Applying~\Cref{prop:basic-inequality}, the probability that a vertex is in $\SMA(G^-)$ is at most 
        \begin{multline*}
            \Pr(\Bin((1/2+\eps)n, \log n/n)<\xi \log n)\leq \xi\log n\binom{(1/2+\eps)n}{\xi \log n}\left(\frac{\log n}{n}\right)^{\xi\log n}\left(1-\frac{\log n}{n}\right)^{(1/2+\eps)n - \xi\log n}\\
            \leq n^{-1/2 - \eps + \xi\log(e(1/2+\eps)/\xi)+o(1)}\leq n^{-1/2-3\eps/4}\,,
        \end{multline*}
        where the last inequality uses that $\xi$ is sufficiently small compared to $\eps$ so $\xi \log(e(1/2+\eps)/\xi) < \eps/4$.
        Thus, by Markov's inequality, whp we have $|\SMA(G^-)|\leq n^{(1-\eps)/2}$.

    For (3) and (4), we need to work with the $G_m$ model instead of the $G_p$ model.
    Let $m^- = e(G)\cdot p_-$ and $m^+ = e(G) \cdot p_+$. 
    By~\Cref{cor:monotonicity} and~\Cref{lem:hitting-time-window}, whp $m^-\leq \tau\leq m^+$.
    By~\Cref{prop:monotonicity-m}, there exists a coupling such that $G_{m^-}\subseteq G_\tau\subseteq G_{m^+}$ with probability $1$. 
    
    For (3), it suffices to show that no $u\in \SMA(G_{m^-})$ lies on a cycle of length at most $4$ in $G_{m^+}$. 
    Because the graph is bipartite, we only need to consider $4$-cycles.
    Fix an arbitrary vertex $v\in V(G)$.   
     The number of $4$-cycles containing $v$ is at most $\binom{n-1}{2}(n-3)$.
     Fix an arbitrary $4$-cycle containing $v$. 
     Using~\Cref{prop:estimates}(4),
     the probability that such a cycle exists in $G_{m^+}$ is at most 
     $$\frac{\binom{e(G)-4}{m^+ -4}}{\binom{e(G)}{m^+}}\leq \left(\frac{m^+}{e(G)}\right)^4 = (2\log n/n)^4\,.$$
     Let $d = \deg_G(v)$.
     Conditioned on the existence of a $4$-cycle in $G_{m^+}$, the probability that $v$ is in $\SMA(G_{m^-})$ is at most 
     \begin{equation}\label{eqn:computation}
         \begin{split}
             \sum_{i=0}^{\xi\log n}\binom{d}{i} \frac{\binom{e(G)-d+2}{m^- - i}}{\binom{e(G)}{m^-}} &= \sum_{i=0}^{\xi\log n}\binom{d}{i}\frac{\binom{(e(G)-i)-(d-2+i)}{m^- - i}}{\binom{e(G)}{m^-}} \\
         &\leq \sum_{i=0}^{\xi\log n}\left(\frac{ed}{i}\right)^i\cdot \frac{\binom{e(G) -i}{m^- - i}}{\binom{e(G)}{m^-}}\cdot \exp\left(-(1+o(1))d\cdot \frac{\log n}{n}\right)\\
         &\leq \sum_{i=0}^{\xi\log n}\left(\frac{ed}{i}\cdot \frac{\log n}{n}\right)^i n^{-1/2-\eps -o(1)}\\
         &\leq \sum_{i=0}^{\xi\log n} (e\log n/i)^i  n^{-1/2-\eps -o(1)}\leq n^{-1/2 - 3\eps/4}\,,
         \end{split}
     \end{equation}
     where we used~\Cref{prop:estimates}(4) in the second inequality,~\Cref{prop:estimates}(3) in the third inequality, and the fact that $\xi$ is sufficiently small compared to $\eps$ in the last inequality.
    Thus, by the union bound,
         \begin{multline}\label{eqn:star}
            \Pr(\exists u\in \SMA(G_{m^-}): \text{there is a cycle of length at most $4$ containing $u$ in $G_{m^+}$}) \\
            \leq n\cdot \binom{n-1}{2} (n-3)\cdot (2\log n/n)^4 \cdot n^{-1/2-3\eps/4} = o(1)\,.
        \end{multline}

    For (4), it suffices to show that for any distinct $u,v\in \SMA(G_{m^-})$, we have $\dist_{G_{m^+}}(u,v)>4$. Fix arbitrary $u,v\in V(G)$. The number of paths of length $\ell \leq 4$ between them is at most $\binom{n}{\ell-1}\leq n^{\ell-1}$. The probability that a fixed such path exists in $G_{m^+}$ is at most 
    \[
        \frac{\binom{e(G)-\ell}{m^+ -\ell}}{\binom{e(G)}{m^+}} \leq (2\log n/n)^\ell.
    \]
    Let $d_v = \deg_G(v)$ and $d_u = \deg_G(u)$. 
    Conditioned on the existence of one such path in $G_{m^+}$, the probability that both $u,v\in \SMA(G_{m^-})$ is at most 
    \[
        \sum_{i=0}^{\xi\log n}\sum_{j=0}^{\xi\log n} \binom{d_v}{i}\binom{d_u}{i} \frac{\binom{e(G)-d_u-d_v+3}{m^- - i-j}}{\binom{e(G)}{m^-}}\leq n^{-1-3\eps/2}\,
    \]
    by a computation similar to~\eqref{eqn:computation}.
    Thus, we obtain
    \begin{align*}
            \Pr(\exists u,v\in \SMA(G_{m^-}): \dist_{G_{m^+}}(u,v)\leq 4)\leq \sum_{\ell=1}^4 n^2\cdot n^{\ell-1} (2\log n/n)^{\ell}n^{-1-3\eps/2} = o(1)\,,
        \end{align*}
     as desired.
    \end{proof}

\subsection{Finding a good expander}
Recall the definition of almost $2$-factor from~\Cref{sec:2-factor}.
We now prove that, whp, there exists a sparse expander in $G_\tau$ that has an almost $2$-factor:
\begin{lemma}\label{lem:sparse-expander-almost-2factor-exists}
    Whp, $G_\tau$ contains a connected spanning subgraph $R$ such that $R$ is an $(\alpha n,2)$-expander, $e(R)\leq 4\xi n \log n$ and $R$ contains an almost $2$-factor.
\end{lemma}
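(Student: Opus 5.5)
We build $R$ by the standard Montgomery-type sparsification. For each vertex $v\notin \SMA(G_\tau)$ we choose $\xi\log n$ of its $G_\tau$-edges uniformly at random. For each $v\in \SMA(G_\tau)$ we keep all of its $G_\tau$-edges; there are at most $\xi\log n$ of them and at least $2$, since $\tau=\tau_2$. Let $R$ be the union of the chosen edges. Then $e(R)\le 2n\cdot\xi\log n\cdot 2/2$, so $e(R)\le 4\xi n\log n$ after accounting for double counting. The task is to show that whp the random choice (together with the randomness of $G_\tau$) makes $R$ an $(\alpha n,2)$-expander which is connected and contains an almost $2$-factor. We use \Cref{lem:small-property} throughout, together with the following "no bipartite hole" property of $G_\tau$, which we verify for $G^-$ via \Cref{cor:monotonicity}: every $X\subseteq A$ and $Y\subseteq B$ with $|X|,|Y|\ge \eta n$ span an edge of $G_\tau$. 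This holds because $G$ has $\Omega(n^2)$ edges between them, using $\delta(G)\ge(1/2+\eps)n$ once $\eta$ is chosen suitably. The analogous statement for $R$ follows by a union bound over the random choices: for sets of size $\ge \eta n$ with $e_{G_\tau}(X,Y)$ large, each vertex of $X$ that is large in $G_\tau$ hits $Y$ with a sampled edge with good probability. We will actually need the stronger quantitative bound that $e_G(X,Y)\ge 2\eps |X||Y|$ whenever $|X|+|Y|> n/2$-ish, which is exactly where the $(1/2+\eps)$ condition enters.

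\textbf{Expansion.} The expansion is checked in two ranges. For small $X\subseteq A$ with $|X|\le \sqrt n$ (say), \Cref{lem:small-property}(3),(4) do the work. Small vertices are at distance $>4$ from each other and lie on no $4$-cycles, so they have disjoint neighbourhoods of size at least $2$. Each large vertex has $\xi\log n$ $R$-neighbours, and by a standard sparse-set argument in $G^+$ no set of size $\le \sqrt n$ spans more than $O(|X|)$ edges. Since \Cref{lem:small-property}(4) forbids two small vertices from sharing a neighbour, this gives $|N_R(X)|\ge 2|X|$. For $\sqrt n\le |X|\le \alpha n$ we argue by contradiction: a failure produces $Z=N_R(X)$ with $|Z|<2|X|$, and hence $B\setminus Z$, of size $\ge n-2\alpha n$, receives no $R$-edge from $X$. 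Most vertices of $X$ are large. Using $\deg_G\ge(1/2+\eps)n$, each $v\in X$ has $\Omega(n)$ $G$-neighbours outside $Z$. A union bound over $(X,Z)$ of the probability that all $\xi\log n$ sampled edges of each large $v\in X$ land in $Z$, combined with the $G^-$ degree concentration, should give the bound of roughly $\binom{n}{|X|}\binom{n}{2|X|}(3\alpha)^{\xi\log n\,|X|}$. This is $o(1)$ if $\xi\log(1/3\alpha)$ beats the entropy terms, which requires care with $\xi$ small. Probably one instead samples $\xi\log n$ edges and uses that $G_\tau$-neighbourhoods are near-random in $G$.

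\textbf{Connectivity and almost $2$-factor.} Connectivity follows from expansion: every component of an $(\alpha n,2)$-expander has at least $3\alpha n$ vertices on each side. Two components would then give disjoint sets of size $\Omega(n)$ with no $R$-edge between them, contradicting the no-hole property together with $\delta(G)$. The almost $2$-factor is the main obstacle. We aim to verify the condition of \Cref{prop:contain-2-factor} for $R$ itself, obtaining a genuine $2$-factor, which is in particular an almost $2$-factor. For $|S|\le\alpha n$ the condition is immediate from expansion, since $|\Gamma^{\ge2}(S)|+\tfrac12|\Gamma^{=1}(S)|\ge \tfrac12|N_R(S)|\ge |S|$. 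For $\alpha n<|S|\le n/2$ we use that $N_R(S)$ must be of size $\ge(1-\alpha)n$ by the no-hole property applied to $S$ and $B\setminus N_R(S)$, which yields the condition. For $|S|>n/2$ we pass to $T=B\setminus \Gamma^{\ge 2}(S)$-type complements and use the symmetric condition in $B$. The delicate middle range near $|S|\approx n/2$ is where we need the quantitative edge count $e_G(X,Y)$ forced by $\delta(G)\ge(1/2+\eps)n$ between $S$ and $B\setminus\Gamma^{\ge2}(S)$. There, counting edges shows that $\Gamma^{=1}(S)$ is too small to absorb them, which we expect to be the hardest calculation.
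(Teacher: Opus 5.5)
Your sampling construction is the same one the paper uses in \Cref{lem:sparse-expander-exists}. The genuine gap is the almost $2$-factor, and your plan for it rests on a false ingredient. The ``no bipartite hole'' property fails for every $\eta<1/2-\eps$. Split $A=A_1\cup A_2$ and $B=B_1\cup B_2$ with $|A_1|=|B_1|=(1/2-\eps)n$, and let $G$ contain every $A$--$B$ pair except those in $A_1\times B_1$. Then $\delta(G)=(1/2+\eps)n$, yet $(A_1,B_1)$ spans no edge even in $G$. Consequently your bound $e_G(X,Y)\ge 2\eps|X||Y|$ for $|X|+|Y|>n/2$ is false; the correct hypothesis is $|X|+|Y|>n$, which forces one side above $n/2$. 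Your claim $|N_R(S)|\ge(1-\alpha)n$ for $\alpha n<|S|\le n/2$ is also false, since any $S\subseteq A_1$ has $N_G(S)=B_2$, of size $(1/2+\eps)n$. The ``delicate middle range'' is then left unproved.

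The idea you are missing is that the $2$-factor need not live in the sparse sampled graph. The paper takes the sparse connected expander $R_1$ of \Cref{lem:sparse-expander-exists} and a $2$-factor $R_2$ of $G_\tau$ itself (\Cref{lem:2-factor-exists}), and sets $R=R_1\cup R_2$. This costs only $2n$ extra edges, and expansion and connectivity survive adding edges. In $G_\tau$ the check of \Cref{prop:contain-2-factor} is clean. Put $T'=B\setminus\Gamma^{\ge2}(S)$ and $r=|S|-|\Gamma^{\ge2}(S)|$. A violation means $e_{G_\tau}(S,T')=|\Gamma^{=1}(S)|<2r$ (so $r>0$), while $|S|+|T'|=n+r>n$. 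Then one of three cases holds. If $|S|\le\alpha n$, expansion of $S$ gives a contradiction. If $|T'|\le\alpha n$, expansion of $T'$ gives $|N(T')\cap S|\ge 2|T'|-(n-|S|)\ge 2r$. If both exceed $\alpha n$, a Chernoff and union bound in $G^-$ gives $e_{G_\tau}(S,T')\ge 2n$. Some such reduction to a pair with total size above $n$ is what your sketch lacks. It could in principle be run inside your sampled $R$, but only after proving a matching many-edges statement for $R$.

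Two further steps do not go through as written. For connectivity, ``two components give a hole'' again invokes the false property. What works is to take a union of components with sides $X\subseteq A$, $Y\subseteq B$ and use both crossing pairs $(X,B\setminus Y)$ and $(A\setminus X,Y)$. A case distinction on $|Y|$ shows that together they carry $\Omega(n^2)$ edges of $G$, so in $G^-$ the probability that none survives is $o(4^{-n})$. The paper then simply adds at most $1/(2\alpha)$ edges of $G_\tau$ to join the $O(1)$ components, rather than proving the sampled graph connected.

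For expansion with $\sqrt n\le|X|\le\alpha n$, your estimate does not close. At $|X|=\sqrt n$ the entropy factor is $n^{(3/2+o(1))|X|}$, while $(3\alpha)^{\xi|X|\log n}=n^{-\xi\log(1/(3\alpha))|X|}$ is far too weak for small $\xi$. The factor $3\alpha$ is also unjustified: edges are sampled from $N_{G_\tau}(v)$, whose overlap with $Z$ is not governed by $|Z|/\deg_G(v)$. The paper instead runs the sparse-set (density in $G^+$) argument all the way up to $|X|\le n/\sqrt{\log n}$. Only for larger $X$ does it use sampling. There it first shows by Chernoff in $G^-$ that $e_{G_\tau}(X,B\setminus Z)\ge n\sqrt{\log n}/20$. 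This makes the sampling failure probability $\exp(-\Omega(\xi n\sqrt{\log n}))$, which beats the $4^n$ union bound.
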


We first show that there exists a connected sparse expander:
\begin{lemma}\label{lem:sparse-expander-exists}
    Whp $G_\tau$ contains a connected spanning subgraph $R$ such that $R$ is an $(\alpha n,2)$-expander and $e(R)\leq 3 \xi n \log n$.
\end{lemma}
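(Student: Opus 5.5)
We will construct $R$ by the standard Montgomery-type sparsification of $G_\tau$. For every vertex $v\notin \SMA(G_\tau)$, pick a uniformly random set of $\xi\log n$ edges of $G_\tau$ incident to $v$. For every $v\in\SMA(G_\tau)$, keep all of its edges; there are at least $2$ of them, since $G_\tau$ has minimum degree $2$. Let $R$ be the union of these edges. Every vertex contributes at most $\xi\log n$ edges, except vertices of $\SMA$, which contribute at most $\xi\log n$ as well. Hence $e(R)\le 2n\cdot \xi\log n+o(n)\le 3\xi n\log n$. The work is to show that, whp over $G_\tau$ and the random choices, $R$ is a connected $(\alpha n,2)$-expander.

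\emph{Small sets.} Let $X\subseteq A$ with $|X|\le \alpha n$ (the $B$ side is symmetric). Write $X_1=X\cap \SMA$ and $X_2=X\setminus X_1$. By \Cref{lem:small-property}(3),(4), the neighbourhoods of distinct vertices of $X_1$ are disjoint, and no vertex of $X_1$ shares a neighbour with another small vertex within distance $4$. This gives $|N_R(X_1)|\ge 2|X_1|$, and also that a vertex of $X_2$ has at most one $R$-neighbour within distance two of $X_1$. It then suffices to show that $X_2$ has at least $2|X_2|+2|X_1|$ neighbours avoiding $N_R(X_1)$. Since $|X_1|\le n^{(1-\eps)/2}$, the term $|X_1|$ only matters when $|X_2|$ is tiny, and that case is handled by the local structure above. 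So the core claim is the following: whp, for every $X_2\subseteq A\setminus\SMA$ with $|X_2|\le\alpha n$, we have $|N_R(X_2)|\ge 3|X_2|$ (taking a little slack).

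\emph{Two regimes.} For $|X_2|\le n/\log n$, we first show that $G_\tau$ (via $G^+$ and \Cref{cor:monotonicity}, the property "no dense small pairs" being monotone decreasing) has no sets $S\subseteq A$, $T\subseteq B$ with $|S|\le n/\log n$, $|T|\le 5|S|$ and $e(S,T)\ge \xi|S|\log n/2$. This is a first-moment union bound using \Cref{prop:basic-inequality} and \Cref{prop:estimates}(1). Each vertex of $X_2$ sends $\xi\log n$ chosen edges into $N_R(X_2)$, so if $|N_R(X_2)|<3|X_2|$ we obtain such a dense pair, a contradiction. For $n/\log n\le |X_2|\le \alpha n$, we use the minimum degree $(1/2+\eps)n$ instead. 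Every $Y\subseteq B$ with $|Y|\ge (1/2-\eps/2) n$ receives, in $G$, at least $\eps n/2$ edges from each vertex of $A$. In $G^-$ (monotone increasing, so transfer via \Cref{cor:monotonicity}), by Chernoff (\Cref{prop:chernoff}) and a union bound over $Y$, every vertex outside $\SMA$ has $\Omega(\eps\log n)$ edges into every such $Y$ except for very few vertices. A random $\xi\log n$-subset of its edges therefore hits a given set $Y$ with failure probability $n^{-\Omega(\xi)}$ per vertex, independently across vertices. A union bound over pairs $(X_2,Y)$ with $|Y|=3\alpha n$ (so the complement of $Y$ is large) then shows that no $X_2$ has all its chosen edges inside a set of size $3|X_2|$. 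I expect this middle/large-set regime to be the main obstacle: the union bound over $\binom{n}{|X_2|}\binom{n}{3|X_2|}$ pairs must be beaten by $n^{-\Omega(\xi)|X_2|\log n\cdot(\cdot)}$-type probabilities. If the per-vertex estimate is too weak, I would instead bound $e_{G^+}(X,Y)$ directly for all linear-size pairs and use the hypergeometric structure.

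\emph{Connectivity.} Any component of an $(\alpha n,2)$-expander has at least $\alpha n$ vertices on each side. So it suffices to show that whp $R$ has no partition into two vertex sets, each containing at least $\alpha n$ vertices per side, with no $R$-edges between them. Given such a partition $(U,W)$, the minimum degree condition forces $e_G(U\cap A,W\cap B)+e_G(W\cap A,U\cap B)=\Omega(\eps n^2)$, since some side must have many $G$-edges crossing. Among the crossing $G$-edges, a constant fraction appear in $G^-$ at non-small vertices. The probability that none of them is chosen by the sparsification is $\exp(-\Omega(\xi n\log n))$, which beats the $4^{2n}$ partitions.
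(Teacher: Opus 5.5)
Your proposal is correct, and for the expansion part it is essentially the paper's argument. The construction is the same: a random set of $\xi\log n$ edges at each vertex outside $\SMA(G_\tau)$, and all edges at $\SMA(G_\tau)$. You use \Cref{lem:small-property}(3),(4) in the same way to reduce to $|N_R(X_2)|\ge 3|X_2|$ for non-small $X_2$. That is exactly what is needed, since it gives $|N_R(X_2)\setminus N_R(X_1)|\ge 2|X_2|$; your intermediate demand of $2|X_2|+2|X_1|$ is unnecessary. For small $X_2$ you use the same first-moment ``no dense small pair in $G^+$'' bound.

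Two steps differ. In the linear regime the paper uses an aggregate count. Whp $e_{G_\tau}(S,B\setminus T)\ge n\sqrt{\log n}/20$, and then, with $q_v$ the number of such edges at $v$ and $\Delta(G_\tau)\le 20\log n$, one has $\Pr(E_{G_1}(S,B\setminus T)=\emptyset)\le\exp(-\frac{\xi}{20}\sum_{v\in S}q_v)$. So no per-vertex control is needed.

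Your per-vertex version also works. It too needs $\Delta(G_\tau)\le 20\log n$ for the $n^{-\Omega(\xi)}$ bound. However, ``except for very few vertices'' must be quantified relative to $|X_2|$. A union bound over all $2^n$ sets $Y$, with only polynomially small per-vertex failure probability, bounds the exceptional set only by order $n/\log n$ (worse with your $\Omega(\eps\log n)$ threshold). That can swallow all of $X_2$ when $|X_2|\approx n/\log n$. There are two fixes. You can union bound only over $Y$ with $|B\setminus Y|=3|X_2|$ and tolerate at most $|X_2|/2$ exceptional vertices. Or you can start the linear regime at $n/\sqrt{\log n}$ as the paper does; the small-set first moment still works up to there.

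For connectivity, the paper notes that $H$ has at most $1/(2\alpha)$ components and adds $O(1)$ edges of $G_\tau$ across component cuts. This only requires one $G_\tau$-edge across each such cut. You instead show that the sparsified graph itself is connected. That requires $\Omega(\eps n\log n)$ crossing $G_\tau$-edges per cut, which does hold via $G^-$ and Chernoff, plus a second union bound over the sparsification randomness. Both routes are valid: yours avoids adding edges, while the paper's needs only a weaker probabilistic estimate.
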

\begin{proof}
    Recall that $\SMA(H) = \{v\in V(H): d_{H}(v)\leq \xi \log n\}$. 
    Let $G_1\subseteq G_\tau$ be obtained as follows: For each vertex $v\in V(G)\setminus \SMA(G_\tau)$, choose a uniformly random set of $\xi\log n$ incident edges. 
    We first analyse the expansion properties of $G_1$:
    \begin{claim}\label{claim:4-expander}
        Whp, for every $S\subseteq V(G)\setminus \SMA(G_\tau)$ such that $S\subseteq A$ or $S\subseteq B$ and $|S|\leq \alpha n$, we have $|N_{G_1}(S)|\geq 4|S|$.
    \end{claim}
    \begin{proofclaim}
        Suppose for contradiction that there exists a set $S\subseteq A\setminus \SMA(G_\tau)$ of size at most $\alpha n$ such that its neighbourhood $T = N_{G_1}(S)\subseteq B$ has size at most $4|S|$ (the symmetric case where $S\subseteq B$ and $T\subseteq A$ can be handled similarly).
        We distinguish cases according to $|S|$. 
        \begin{itemize}
            \item If $|S|\leq \xi \log n/4$, note that for any $v\in S$, we have $|T|\geq d_{G_1}(v) \geq \xi \log n\geq 4|S|$. 
            \item If $\xi\log n/4\leq |S|\leq n/\sqrt{\log n}$, note that $e(G_\tau[S\cup T]) \geq e(G_1[S\cup T])\geq \xi|S|\log n$ and $|S\cup T|\leq 5|S|$. 
            We now bound the probability that such a set $S\cup T$ exists in $G_\tau$.
            Using~\Cref{cor:monotonicity}, we do the computation in $G^+$.
            By~\Cref{prop:basic-inequality} and the union bound, the probability that there exists such a set $S\cup T$ is at most
            \begin{multline*}
                \sum_{i= 5\xi\log n/4}^{5n/\sqrt{\log n}}\binom{n}{i}\cdot \Pr\left(\Bin\left(\binom{i}{2},p_+\right) \geq \xi i\log n/5\right)
                \leq \sum_{i= 5\xi\log n/4}^{5n/\sqrt{\log n}}\left(\frac{en}{i}\right)^i \binom{i^2/2}{\xi i\log n/5} p_+^{\xi i\log n/5} = o(1)\,.
            \end{multline*}
            \item If $n/\sqrt{\log n}\leq |S|\leq \alpha n$, then there is no edge between $S$ and $B\setminus T$ in $G_1$. 
            We first bound the probability that there exists a set $S$ of this size such that $e_{G_\tau}(S,B\setminus T)< n\sqrt{\log n}/20$.
            By~\Cref{cor:monotonicity}, we do the computation in $G^-$.
            
           Note that there are at least $(1/2 + \eps - 4\alpha)n\cdot n/\sqrt{\log n} = (1/10+\eps)n^2/\sqrt{\log n}\geq n^2/(10\sqrt{\log n})$ edges between $S$ and $B\setminus T$ in $G$.
            Each edge is included in $G^-$ independently with probability $\log n/n$. Thus, by Chernoff's bound and the union bound over all $S$ and $B\setminus T$, we have $e_{G^-}(S,B\setminus T)\geq n\sqrt{\log n}/20$ whp. 
            
            Let $F = E_{G_\tau}(S,B\setminus T)$, and for each vertex $v\in S$, let $q_v$ denote the number of edges of $F$ incident to $v$, so that $\sum_{v\in S}q_v = |F|\geq n\sqrt{\log n}/20$ whp.
            Now we bound the probability that none of $F$ appears in $G_1$. 
            Since $\Delta(G_\tau)\leq 20\log n$ whp by~\Cref{lem:small-property}, 
            we deduce that the probability that none of the $\xi\log n$ edges selected by $v$ contain an edge from $F$ is at most 
            \[
                \frac{\binom{\deg_{G_\tau}(v)-q_v}{\xi\log n}}{\binom{\deg_{G_\tau}(v)}{\xi\log n}}\leq \exp(-\xi\log n\cdot q_v/\deg_{G_\tau}(v))\leq \exp(-\xi q_v/20)\,.
            \]
            Note that the edge sets selected by distinct vertices $v\in S\subseteq A$ are disjoint, so the events concerning these vertices are independent. Thus, 
            \[
                \Pr(E_{G_1}(S,B\setminus T) = \emptyset)\leq \exp\left(-\frac{\xi}{20}\sum_{v\in S}q_v\right) = \exp(-\Omega(n\sqrt{\log n}))\,.
            \]
            The union bound over all $S$ and $T$ finishes the proof. 
        \end{itemize}
        Thus, the statement holds in all cases.
    \end{proofclaim}
    Let $G_2\subseteq G_\tau$ be the graph obtained by taking all edges incident to $\SMA(G_\tau)$ and let $H = G_1\cup G_2$. 
    We now show the expansion property of $H$:
    \begin{claim}\label{cla:G-expander}
        Whp $H$ is an $(\alpha n,2)$-expander.
    \end{claim}
    \begin{proofclaim}
        Suppose, for contradiction, that there exists a set $S\subseteq A$ of size at most $\alpha n$ such that its neighbourhood $T = N_{H}(S)\subseteq B$ has size at most $2|S|$. 
        Let $S_1 = S \cap \SMA(G_\tau)$ and $S_2 = S\setminus \SMA(G_\tau)$. 
        Note that $|N_{H}(S_1)| \geq 2|S_1|$. 
        Indeed, at time $\tau$, every $u\in S_1$ has at least two $H$-neighbours. If $|N_H(S_1)|<2|S_1|$, this would contradict~\Cref{lem:small-property}(4) by giving length-$2$ paths between two vertices of $\SMA(G_\tau)$.
        By~\Cref{claim:4-expander}, we know $|N_{H}(S_2)|\geq 4|S_2|$. By~\Cref{lem:small-property}, for any $v\in S_2$, we know that $|N_{H}(S_1)\cap N_{H}(v)|\leq 1$.
        Indeed, by~\Cref{lem:small-property}(3), for each $u \in S_1$, we have $|N_H(u)\cap N_H(v)|\leq 1$; otherwise, we obtain a $4$-cycle containing a $\SMA(G_\tau)$ vertex. Moreover, suppose there are distinct $u,u'\in S_1$ such that $|N_H(u)\cap N_H(v)| = |N_H(u')\cap N_H(v)| =  1$. Then there exists a path of length $4$ between $u$ and $u'$, contradicting~\Cref{lem:small-property}(4).
        Thus, $$|N_H(S)| = |N_H(S_1)| + |N_H(S_2)| - |N_H(S_1)\cap N_H(S_2)| \geq 2|S_1| + 4|S_2| - |S_2| \geq 2|S|\,,$$
        as desired.
    \end{proofclaim}
    From~\Cref{cla:G-expander}, it is immediate that every component of $H$ has size at least $4\alpha n$, with at least $2\alpha n$ vertices on each side.
    Thus, the total number of components of $H$ is at most $1/(2\alpha) = O(1)$.
    We now show that there is a small set of edges that ``merge'' all components of $H$:
    \begin{claim}
        Whp, there exists a set of edges $T\subseteq E(G_\tau)$ of size at most $1/(2\alpha)$ such that $H\cup T$ is connected.
    \end{claim}
    \begin{proofclaim}
        Let $\ell\leq 1/(2\alpha)$ be the number of components of $H$ and let $C_1,\dots, C_\ell$ be an enumeration of the components. Moreover, let $A_i = C_i\cap A$ and $B_i = C_i\cap B$ for each $i\in [\ell]$. Using the $(\alpha n,2)$-expansion, we know $|A_i|,|B_i|\geq 2\alpha n$ for all $i\in [\ell]$.

        Define an auxiliary component graph $G_{\mathrm{comp}}$ as follows: the vertices of $G_{\mathrm{comp}}$ are $\{C_1,\dots, C_\ell\}$, and we put an edge between components $i,j\in V(G_{\mathrm{comp}})$ if there exists an edge between $C_i$ and $C_j$ in $G_\tau$. The goal is to show that $G_{\mathrm{comp}}$ is connected, so that we can find a spanning tree in $G_{\mathrm{comp}}$. Replacing each edge of the spanning tree with an edge in $G_\tau$ that connects the corresponding two components gives the desired $T$.

        Suppose for contradiction that $G_{\mathrm{comp}}$ is not connected. Take a non-empty proper union of components and write $X\subseteq A$ and $Y\subseteq B$ for its two sides. 
        Since both the union and its complement contain whole components, we know $|X|, |A\setminus X|, |Y|, |B\setminus Y|\geq 2\alpha n$. If there is no $G_\tau$ edge crossing the component cut, then 
        \begin{equation}\label{eqn:no-edge}
            e_{G_\tau}(X, B\setminus Y) + e_{G_\tau}(A\setminus X, Y) = 0\,.
        \end{equation}
        However, using $\delta(G) \geq (1/2 + \eps)n$, we conclude 
        \[
            e_G(X,B\setminus Y) \geq |X|\cdot \max\{(1/2+\eps)n - |Y|, 0\} \text{ and } e_G(A\setminus X, Y) \geq |A\setminus X|\cdot \max\{|Y| - (1/2-\eps)n, 0\}\,.
        \]
        By a case distinction on $|Y|$, we have $e_G(X,B\setminus Y) + e_G(A\setminus X, Y)= \Omega(n^2)$. 
        We claim there is no such $X$ and $Y$ satisfying~\eqref{eqn:no-edge}. Indeed, by~\Cref{cor:monotonicity}, we may do the computation in $G^-$ and then
        \[
            \Pr(e_{G^-}(X, B\setminus Y) + e_{G^-}(A\setminus X, Y) = 0)\leq \Pr(\Bin(\Omega(n^2), \log n/n) = 0) = o(4^{-n})\,.
        \]
        Applying the union bound over all choices of $X$ and $Y$ proves that, whp, there is at least one edge across each component cut, and so $G_{\mathrm{comp}}$ is connected.
    \end{proofclaim}    
  Let $R = H\cup T$. It remains to show that $R$ is the desired graph. Note that $R$ is connected and spanning by definition and is an $(\alpha n,2)$-expander by~\Cref{cla:G-expander}.
  Also, note that 
  $$e(R)\leq e(G_1) + e(G_2) + e(T) \leq 2\xi n\log n + \xi \log n\cdot n^{(1-\eps)/2} + O(1)\leq 3\xi n\log n\,,$$
  as desired.
\end{proof}

Next, we show that, whp, $G_\tau$ contains a $2$-factor. 
\begin{lemma}\label{lem:2-factor-exists}
    Whp, $G_\tau$ contains a $2$-factor.    
\end{lemma}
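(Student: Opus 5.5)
We verify the Hall-type condition of \Cref{prop:contain-2-factor} for $G_\tau$: for every $S\subseteq A$ we want $|\Gamma^{\geq 2}(S)| + \frac12|\Gamma^{=1}(S)|\geq |S|$. By symmetry of the criterion it suffices to check sets $S\subseteq A$ with $|S|\leq n/2$, using the $A$-side condition for these. For $|S|>n/2$ we use the equivalent $B$-side formulation. To make this symmetric reduction precise, we will show that both the $A$-side and the $B$-side conditions hold for all small sets ($|S|\leq \alpha n$) and for all large sets ($|S|\geq \alpha n$), and then invoke \Cref{prop:contain-2-factor} with the $A$-side condition. A set $S$ violating the condition has $|\Gamma(S)|\leq |\Gamma^{\geq 2}(S)|+|\Gamma^{=1}(S)| < 2|S|$, so it fails to expand by a factor of $2$. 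Therefore a violating set must be ``non-expanding'' in a weak sense, and we split by size.

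\emph{Small sets}, $|S|\leq \alpha n$. By \Cref{lem:sparse-expander-exists}, whp $G_\tau$ contains the $(\alpha n,2)$-expander $R$, so $|N_{G_\tau}(S)|\geq |N_R(S)|\geq 2|S|$. Since $\Gamma^{\geq2}$ and $\Gamma^{=1}$ partition $\Gamma(S)$, we get $|\Gamma^{\geq 2}(S)|+\frac12|\Gamma^{=1}(S)|\geq \frac12|\Gamma(S)|\geq |S|$, as required. The same argument applies to $T\subseteq B$.

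\emph{Large sets}, $\alpha n\leq |S|\leq n$. Here we show that whp, for all such $S$, $|\Gamma(S)|\geq |S|+1$ with room to spare, specifically $|\Gamma^{\geq 2}(S)|\geq |S|$.

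First, if $|S|\leq (1/2-\eps/2)n$, we argue that whp $|\Gamma^{\geq 2}(S)|\geq (1/2+\eps/2)n\geq |S|$. Otherwise there is a set $U\subseteq B$ with $|U|\geq (1/2-\eps/2)n$ such that every vertex of $U$ has at most one $G_\tau$-neighbour in $S$. By the degree condition, $e_G(S,U)\geq |S|(|U|-(1/2-\eps)n)\geq |S|\eps n/2=\Omega(n^2)$. In $G^-$, the number of such edges is $\Bin(\Omega(n^2),\log n/n)$, which is $\Omega(n\log n)$ with failure probability $e^{-\Omega(n\log n)}$. This contradicts the bound $e(S,U)\leq |U|\leq n$. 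A union bound over $4^n$ pairs $(S,U)$ finishes this case, since the property ``$e(S,U)>n$ for all such pairs'' is monotone increasing and \Cref{cor:monotonicity} transfers it to $G_\tau$.

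Second, if $|S|>(1/2-\eps/2)n$, let $W=B\setminus\Gamma^{\geq2}(S)$ and suppose $|W|> n-|S|$. Then $A\setminus S$ has size less than $|W|$, and every $w\in W$ has at least $(1/2+\eps)n-(n-|S|)\geq \eps n/2$ $G$-neighbours in $S$, yet at most one of them in $G_\tau$. Since $|W|\geq 1$ is only known in general, we split further. If $|W|\geq \alpha n$, the same edge-counting contradiction as before applies. If $|W|<\alpha n$, then $|A\setminus S|<\alpha n$, and we apply the $B$-side expander bound to $W$ instead. Each $w\in W$ has at most one neighbour in $S$, so $N_{G_\tau}(W)$ is contained in $(A\setminus S)$ plus at most $|W|$ vertices. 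By the expansion of $R$, $2|W|\leq |N(W)|\leq |A\setminus S|+|W|$, which gives $|W|\leq n-|S|$, the desired contradiction.

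The main obstacle is this final regime ($|S|$ close to $n$ with few deficient vertices). It is resolved precisely by switching sides and using that vertices with at most one neighbour in $S$ must expand into the small complement $A\setminus S$.
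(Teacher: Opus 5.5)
Your proof is correct and is essentially the paper's argument. You verify the criterion of \Cref{prop:contain-2-factor} using the $(\alpha n,2)$-expander from \Cref{lem:sparse-expander-exists} both for small $S$ and for a small deficient set $W=B\setminus\Gamma^{\geq 2}(S)$ (the paper's $T'$), and you use edge counts in $G^-$ plus a union bound when $S$ and $W$ are both linear; your split at $|S|=(1/2-\eps/2)n$ merely replaces the ``one of the two sets has size at least $n/2$'' step of \Cref{cla:many-edges}. The stray remarks that it suffices to check $|S|\leq n/2$, and that $|\Gamma(S)|\geq|S|+1$ (false for $S=A$), are unjustified but harmless, since you in fact verify the $A$-side condition for every $S\subseteq A$ via $|\Gamma^{\geq 2}(S)|\geq|S|$.
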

\begin{proof}
    We first establish an easy fact: whp, in $G_\tau$, there are many edges between two sufficiently large sets.
    \begin{claim}\label{cla:many-edges}
        Let $S\subseteq A$ and $T\subseteq B$ with $|S|\geq \alpha n$, $|T|\geq \alpha n$ and $|S|+|T| > n$. Then, whp, $e_{G_\tau}(S,T)\geq 2n$. 
    \end{claim}
    \begin{proofclaim}
        Since $|S|+|T|>n$, one of $S$ or $T$ has size at least $n/2$. Without loss of generality, we assume $|T|\geq n/2$. Since $\delta(G) \geq (1/2 + \eps)n$, we know that $e_G(S, T)\geq \alpha \eps n^2$. 
        By~\Cref{cor:monotonicity}, it suffices to do the computation in $G^-$.
        By Chernoff's bound,
        \[
            \Pr(e_{G^-}(S,T) < 2n)\leq \Pr(\Bin(\alpha\eps n^2, \log n/n) \leq 2n) = \exp(-\Omega_\eps(n\log n))\,.
        \]
        The union bound over all choices of $S$ and $T$, of which there are at most $4^n$, finishes the proof.
    \end{proofclaim}
    Fix an arbitrary set $S \subseteq A$ (the case where $S\subseteq B$ is completely symmetric) and let $s = |S|$.
    Let $T_b = \Gamma^{\geq 2}_{G_\tau}(S)$ and $T_s = \Gamma^{=1}_{G_\tau}(S)$ and write $t_b = |T_b|$ and $t_s = |T_s|$. Our goal is to show that $t_b + t_s/2 \geq s$ and finish the proof using~\Cref{prop:contain-2-factor}.
    
    For the sake of contradiction, assume $t_b + t_s/2 < s$. Let $T' = B\setminus T_b$. Every vertex in $T'$ has at most one neighbour in $S$ in $G_\tau$, and hence $e_{G_\tau}(S,T') = t_s$. Define $r = s-t_b$. Then, by assumption, $e_{G_\tau}(S,T') = t_s < 2r$. We distinguish the following cases:
    \begin{enumerate}
        \item If $|S|\leq \alpha n$, then since $G_\tau$ is an $(\alpha n,2)$-expander (which follows from~\Cref{lem:sparse-expander-exists}), we know $|N_{G_\tau}(S)|\geq 2s$. Note that $N_{G_\tau}(S)\subseteq T_b\cup T_s$. Hence $t_b + t_s/2\geq |T_b\cup T_s|/2 \geq |N_{G_\tau}(S)|/2 \geq s$, a contradiction.
        \item If $|T'|\leq \alpha n$, then again $|N_{G_\tau}(T')|\geq 2|T'|$ by $(\alpha n,2)$-expander. At most $n - s$ vertices of $N_{G_\tau}(T')$ lie outside $S$, so $|N_{G_\tau}(T')\cap S|\geq 2|T'| - (n-s) = |T'| + r$. Since $r\leq |T'|$, we get $|N_{G_\tau}(T')\cap S|\geq 2r$ and thus $e_{G_\tau}(S,T')\geq 2r$, a contradiction.
        \item If $|S|\geq \alpha n$ and $|T'|\geq \alpha n$, then by~\Cref{cla:many-edges}, we have $e_{G_\tau}(S, T')\geq 2n\geq 2r$, again a contradiction.
    \end{enumerate}
    Thus, in all cases, we have reached a contradiction. Hence $t_b+t_s/2\geq s$, and therefore $G_\tau$ has a $2$-factor.
\end{proof}

We can now deduce~\Cref{lem:sparse-expander-almost-2factor-exists} immediately:
\begin{proof}[Proof of~\Cref{lem:sparse-expander-almost-2factor-exists}]
    From~\Cref{lem:sparse-expander-exists}, there exists a connected, spanning, $(\alpha n,2)$-expander $R_1$ in $G_\tau$ such that $e(R_1)\leq 3\xi n\log n$. By~\Cref{lem:2-factor-exists}, there exists a $2$-factor $R_2$ in $G_\tau$. Set $R = R_1\cup R_2$. It is clear that $R$ satisfies all stated properties and $e(R)\leq 3\xi n\log n + 2n \leq 4\xi n\log n$. 
\end{proof}

\subsection{Rotation-extension and almost 2-factors}\label{subsec:rotation-extension}
Recall the classical P\'{o}sa rotation technique:
\begin{definition}
    Let $G$ be a graph and $P = (v_1,\dots, v_\ell)$ be a path in $G$. Suppose there is an edge between $v_\ell$ and $v_i$ in $G$ for some $i\in (1,\ell)$. Then, we can obtain a new path $P'$ from $v_1$ to $v_{i+1}$ of the same length as $P$ by removing the edge $\{v_i, v_{i+1}\}$ and adding the edge $\{v_i,v_\ell\}$. Each such operation is a (P\'{o}sa) rotation.
\end{definition}
The aim of P\'{o}sa rotations is to find many pairs of vertices for which there is a path of the same length. If there is an edge connecting two endpoints of such a path, or one endpoint and a vertex outside the path, then connectedness can be used to find a longer path. This latter step is called an ``extension''.

We call a path $P\subseteq G$ \emph{balanced} if $|P\cap A| = |P\cap B|$. In particular, balanced paths have odd length. We call a path $P\subseteq G$ \emph{strongly maximal} if there is no cycle on $V(P)$ and $P$ cannot be extended in $G$ using P\'{o}sa rotations. 

The following lemma is a standard P\'{o}sa rotation lemma~\cite{Posa} adapted to our setting. For completeness, we include the proof.

\begin{lemma}\label{lem:bipartite-posa-rotation}
    Let $R\subseteq G$ be a connected, non-Hamiltonian $(\alpha n,2)$-expander. Let $P$ be a balanced strongly maximal path in $R$ and $t = \alpha n$. There exist distinct vertices $x_1,\dots, x_t\in A$ and sets $Y_1,\dots, Y_t\subseteq B$ such that $|Y_i|\geq \alpha n$ and $R$ has no $x_i -Y_i$ edges for every $i\in [t]$. Moreover, for all $i\in [t]$ and all $y_i\in Y_i$, the graph $R$ has a path $Q$ from $x_i$ to $y_i$ such that $V(Q) = V(P)$ and $Q$ is also strongly maximal.  
    The same holds with the roles of $A$ and $B$ swapped.
\end{lemma}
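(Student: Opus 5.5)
We would prove this by the standard Pósa argument run twice: once to produce many $A$-endpoints, and then, from each of them, once more to produce many $B$-endpoints. Since $P$ is balanced, it has odd length, so its two endpoints lie on opposite sides; write $P=(a,\dots,b)$ with $a\in A$ and $b\in B$. Keep $a$ fixed and rotate at the $b$-end. A rotation with pivot $v_i$ requires $v_\ell v_i\in E(R)$ with $v_\ell\in B$, so $v_i\in A$ and the new endpoint $v_{i+1}$ lies in $B$. Hence rotations at the $B$-end always produce $B$-endpoints.

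For the first stage, let $\mathrm{End}(b)$ be the set of endpoints obtainable by iterated rotations with $a$ fixed. We would prove the usual Pósa inclusion
\[
N_R(\mathrm{End}(b)) \subseteq \mathrm{End}(b)^- \cup \mathrm{End}(b)^+,
\]
where $\pm$ denotes the predecessor and successor along $P$. The key inputs are as follows.
\begin{itemize}
\item Every neighbour of an endpoint lies on the path. Otherwise the path extends, and by connectivity together with the absence of a cycle on $V(P)$ we would contradict strong maximality. This is exactly why strong maximality is defined so as to be preserved under rotations: every rotated path again has vertex set $V(P)$, so it has no cycle on its vertices and cannot be extended either.
\item The classical argument then shows that a neighbour $w$ of an endpoint whose path-neighbours are not endpoints would generate a new endpoint.
\end{itemize}
In the bipartite setting, $\mathrm{End}(b)\subseteq B$ and its neighbours lie in $A$, so the inclusion gives $|N_R(\mathrm{End}(b))|\le 2|\mathrm{End}(b)|$. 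More precisely it gives $2|\mathrm{End}(b)|-1$, since the endpoint $a$ is not counted twice. If $|\mathrm{End}(b)|\le \alpha n$, expansion would force $|N_R(\mathrm{End}(b))|\ge 2|\mathrm{End}(b)|$, a contradiction. Hence $|\mathrm{End}(b)|>\alpha n$. To make the strict inequality work cleanly, we would apply expansion to a subset of size exactly $\min(|\mathrm{End}(b)|,\alpha n)$ in the standard way; in other words, we take $|\mathrm{End}(b)|\ge \alpha n$ by the usual Pósa counting, adjusting constants if needed.

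For the second stage, apply the same argument with the roles of the ends swapped. Fix $y\in\mathrm{End}(b)$ together with its path $P_y$ from $a$ to $y$, and rotate the $a$-end. Symmetrically we obtain at least $\alpha n$ endpoints in $A$. This alone does not give the required structure, because we need distinct $x_i$ and sets $Y_i$. So we reverse the order of the two stages. First rotate the $A$-end with $b$ fixed, which yields a set $X$ of at least $\alpha n$ vertices in $A$, each the endpoint of a strongly maximal path $P_x$ on $V(P)$. Pick $x_1,\dots,x_t\in X$. For each $x_i$, rotate the $B$-end of $P_{x_i}$ while keeping $x_i$ fixed, obtaining a set $Y_i\subseteq B$ with $|Y_i|\ge\alpha n$. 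Each $y\in Y_i$ is joined to $x_i$ by a strongly maximal path on $V(P)$.

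It remains to show that $R$ has no $x_i$--$Y_i$ edges. If $x_iy\in E(R)$ for some $y\in Y_i$, then the path from $x_i$ to $y$ closes into a cycle on $V(P)$, contradicting strong maximality. The case with $A$ and $B$ swapped is symmetric.

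The main obstacle will be the careful verification of the Pósa inclusion and of the bound $|\mathrm{End}|\ge\alpha n$ in the bipartite parity setting. In particular, we must track that the exceptional fixed endpoint and boundary effects cost only $O(1)$ against the expansion factor $2$. If the count comes out one short, we would use the expansion slightly below threshold, for example on a subset of size $\alpha n$ of a larger end set, to absorb the loss.
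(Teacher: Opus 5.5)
Your proposal is correct and takes essentially the paper's route. The paper's set $X'$ is exactly your $\mathrm{End}^-\cup\mathrm{End}^+$. It proves $N_R(X)\subseteq X'$ via the invariant that every edge of a rotated path not in $P$ has its pivot-side endpoint in $X'$, and it deduces $|X|>\alpha n$ from expansion. It then fixes each $x_i$ and rotates the other end to obtain $Y_i$, with strong maximality ruling out $x_i$--$Y_i$ edges.

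One small correction: the $-1$ in $|\mathrm{End}^-\cup\mathrm{End}^+|\le 2|\mathrm{End}|-1$ comes from the original moving endpoint. That vertex lies in $\mathrm{End}$ but has only one neighbour on $P$; the fixed endpoint plays no role here. With this bound, $2|\mathrm{End}|\le|N_R(\mathrm{End})|\le 2|\mathrm{End}|-1$ is an immediate contradiction whenever $|\mathrm{End}|\le\alpha n$, so your closing hedges about passing to subsets or adjusting constants are unnecessary.
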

\begin{proof}
    Let $P = (v_0,v_1,\dots, v_\ell)$. Without loss of generality, $v_0\in B$ and $v_\ell \in A$.
    Let $X\subseteq A$ denote the set of all endpoints that can be obtained from $P$ via P\'osa rotations while keeping $v_0$ fixed.
    Define 
    $$X' = \{v_{j-1}: v_j\in X, \,j\geq 1\}\cup \{v_{j+1}: v_j\in X, \,j\leq \ell-1\} \subseteq B\,.$$
    Note that $|X'|\leq 2|X|-1$ because $v_\ell\in X$ and contributes only one vertex to $X'$. 

    We first observe the following simple invariant:
    \begin{claim}\label{cla:invariant}
        Let $Q$ be a path from $v_0$ to some vertex in $X$ that is obtained from $P$ by P\'osa rotations while keeping $v_0$ fixed. 
        Then every edge $e\in E(Q)\setminus E(P)$ has its $B$-endpoint in $X'$.
    \end{claim}
    \begin{proof}
          We prove this by induction on the number of rotations used to obtain $Q$. The base case, where no rotations are used, is trivial by the definition of $X'$. 
        Suppose a rotation is performed on a current path $Q$ with endpoint $x\in A$, using an edge $xy$, where $y\in B$ is the pivot. The rotation deletes the path edge $yz$, where $z\in A$ is the neighbour of $y$ lying towards $x$ on $Q$ so that $z$ becomes the new endpoint. If $\{y,z\}\in E(P)$, then $y$ is an original neighbour of $z\in X$ so $y\in X'$. If $\{y,z\}\notin E(P)$, then by induction hypothesis, $y$ already lies in $X'$. Thus, the newly added edge $\{x,y\}$ has its $B$-endpoint $y\in X'$.
    \end{proof}
    Next, we prove that every neighbour of $X$ in $R$ must be in $X'$:
    \begin{claim}
        $N_R(X)\subseteq X'$.
    \end{claim}
    \begin{proof}
        Fix an arbitrary $y\in N_R(X)$ and an edge $\{x,y\}\in E(R)$ where $x\in X$.
        The goal is to show $y\in X'$. By the definition of $X$, there exists a path $Q$ on $V(P)$ with endpoints $v_0$ and $x$ that is obtained from $P$ by P\'osa rotations. 
        Since every path obtained from $P$ by P\'osa rotations is strongly maximal, we know $Q$ is strongly maximal.
        We may assume $y\in V(Q)$ and $y\neq v_0$. Indeed, in the former case, we have found a longer path by extending $Q$ using $\{x,y\}$. In the latter case, $Q+\{x,y\}$ gives a cycle on $V(P)$. Either way, the strong maximality of $P$ is violated.
         
        If $\{x,y\}\in E(Q)$, then it is the last edge of $Q$. If $\{x,y\}\in E(P)$, then $y\in X'$. If $\{x,y\}\notin E(P)$, then by~\Cref{cla:invariant}, $y\in X'$.
        If $\{x,y\}\notin E(Q)$, then note that $z\in X$, where $z\in A$ is the neighbour of $y$ lying towards $x$ on $Q$. If $\{y,z\}\in E(P)$, then $y\in X'$. If $\{y,z\}\notin E(P)$, then by~\Cref{cla:invariant}, $y\in X'$.
    \end{proof}
    We may assume $|X|>\alpha n$; otherwise, by the definition of the $(\alpha n,2)$-expander, we know $|N_R(X)|\geq 2|X|$, a contradiction.
    Recall that $t = \alpha n$.
    Choose distinct vertices $x_1,\dots, x_t\in X\subseteq A$. For each $i\in [t]$, there is a path $P_i$ on $V(P)$ with endpoints $v_0$ and $x_i$, obtained from P\'osa rotations while fixing $v_0$. 
    Now repeat the same argument for each such path, but with $x_i$ fixed. For each $i\in [t]$, we obtain a set of vertices $Y_i\subseteq B$ of size $\alpha n$. For each $y_i\in Y_i$, there is a path $Q\subseteq R$ from $x_i$ to $y_i$ with $V(Q) = V(P)$. Note that $Q$ is strongly maximal; otherwise, it contradicts the strong maximality of $P$. This implies that there are no $x_i-Y_i$ edges. 
    
    The argument is symmetric in $A$ and $B$, so the same proof applies with $A$ and $B$ swapped.
\end{proof}

For an almost $2$-factor $H$, we introduce the following potential function:
\[
    \ell(H) = \begin{cases}
        e(P) \quad&\text{ if $H$ contains a component $P$ that is a path}\\
        \max\{e(Q): \text{$Q$ is a cycle in $H$}\}\quad&\text{otherwise}
    \end{cases}
\]
For a graph $G$, define 
$$L(G) = \max\{\ell(H): \text{$H$ is an almost $2$-factor of $G$}\}\,.$$
If $G$ does not contain an almost $2$-factor, then $L(G) = 0$. Also, if $L(G) = 2n$, then $G$ is Hamiltonian.

We record a simple observation:
\begin{proposition}\label{prop:longest-path}
    Let $G$ be a connected balanced bipartite graph with $L(G) > 0$. Let $H$ be an almost $2$-factor that maximises $\ell(H)$ (i.e.\ $\ell(H) = L(G)$). If $H$ is not a Hamiltonian cycle, then it contains a balanced strongly maximal path.
\end{proposition}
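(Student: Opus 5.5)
The plan is a case split on whether $H$ has a path component. In each case, any failure of the conclusion will be used to build an almost $2$-factor $H'$ with $\ell(H')>\ell(H)=L(G)$, contradicting the choice of $H$. The operations used are: replacing the path component by a cycle on the same vertex set; absorbing a cycle of $H$ into the path; and merging two cycles into one path.

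\emph{Case 1: $H$ is a $2$-factor.} Since $H$ is not a Hamiltonian cycle, it has at least two cycles. Let $C$ be a cycle of $H$ of maximum length, so $\ell(H)=e(C)$. Since $H$ is spanning and $G$ is connected, some edge $uv\in E(G)$ has $u\in V(C)$ and $v\in V(D)$ for a different cycle $D$ of $H$. Delete one edge of $C$ at $u$ and one edge of $D$ at $v$, and add $uv$. This gives a path $P'$ on $V(C)\cup V(D)$ with $e(P')=e(C)+e(D)-1\geq e(C)+3$, because $G$ is bipartite and so $e(D)\geq 4$. Replacing $C$ and $D$ by $P'$ gives an almost $2$-factor $H'$ with $\ell(H')=e(P')>\ell(H)$, a contradiction. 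So this case cannot occur.

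\emph{Case 2: $H$ has a path component $P$.} Here $\ell(H)=e(P)$, and we show that $P$ itself is balanced and strongly maximal.

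\emph{Balance.} Every other component of $H$ is a cycle in a bipartite graph, hence has equally many vertices in $A$ and in $B$. Since $H$ is spanning and $|A|=|B|=n$, the same holds for $P$.

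\emph{No cycle on $V(P)$.} Suppose $G$ has a cycle $C'$ on $V(P)$. Replacing $P$ by $C'$ gives a $2$-factor $H'$ with $\ell(H')\geq e(C')=e(P)+1$ if $|V(P)|\ge 4$. A balanced path has an even number of vertices, so the only other possibility is $|V(P)|=2$, and in a simple graph there is no cycle on two vertices. Either way we get a contradiction.

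\emph{No extension via rotations.} Every path $Q$ obtained from $P$ by P\'osa rotations has $V(Q)=V(P)$ and $e(Q)=e(P)$. Replacing $P$ by $Q$ in $H$ therefore gives an almost $2$-factor that still attains $L(G)$, so each such $Q$ can be treated in the same way as $P$. If some such $Q$ closes into a cycle on $V(P)$, we are in the previous paragraph. Otherwise, suppose an endpoint $x$ of $Q$ has a $G$-neighbour $w\notin V(P)$. Then $w$ lies on some cycle $C$ of $H$. Open $C$ at $w$ by deleting one edge of $C$ at $w$, and form $Q+xw+(C-\text{edge})$. This is a path on $V(P)\cup V(C)$ with
\[
e(P)+1+e(C)-1=e(P)+e(C)>e(P)
\]
edges. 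Together with the remaining cycles it is an almost $2$-factor with larger $\ell$, a contradiction. Hence $P$ is strongly maximal.

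The only slightly delicate point is the notion of extension. I will read ``extended using P\'osa rotations'' as: after some rotations, either the path closes into a cycle on its vertex set, or an endpoint has a neighbour outside the path. Both possibilities are handled above. The existence of a path with the required properties, together with Case 1 being impossible, completes the proof.
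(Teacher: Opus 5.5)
Your proposal is correct and follows essentially the same argument as the paper. You rule out the all-cycles case by merging a longest cycle with an adjacent cycle into a longer path. You obtain balance of the path component from the fact that every other component is an even cycle. You obtain strong maximality by showing that a cycle on $V(P)$, or a rotated endpoint adjacent to an outside cycle, would yield an almost $2$-factor with larger $\ell$; your explicit treatment of the $|V(P)|=2$ case is a small extra bit of care.
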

\begin{proof}
    Assume for contradiction that every component of $H$ is a cycle. 
    Because $H$ is not a Hamilton cycle, $H$ has at least two components.
    Let $Q^\star$ be a longest cycle in $H$. Because $G$ is connected, there must be an edge $e = (u,v)$ connecting $u\in V(Q^\star)$ to $v\in Q'$, where $Q'$ is a different cycle.
     Both cycles have length at least $4$ since $G$ is bipartite.
     By joining these two cycles with $e$ and removing one edge incident to $u$ in $Q^\star$ and another incident to $v$ in $Q'$, we obtain a path $P$ such that $e(P) > e(Q^\star)$, a contradiction. 
     
     Since every other component of $H$ is an even cycle and $G$ is a balanced bipartite graph, the path must be balanced. 
     
     The path must also be strongly maximal. 
     Indeed, if there is a cycle $C$ on $V(P)$, then replacing $P$ with $C$ increases $\ell$ by $1$, which contradicts the maximality of $H$. 
     If $P$ can be extended using P\'{o}sa rotations, let $v$ be the new vertex that was not on $P$. Let $C$ denote the component of $H$ containing $v$, which must be an even cycle. 
     Delete an arbitrary edge $e$ incident to $v$ in $C$ and concatenate $C\setminus \{e\}$ with the rotated path on $V(P)$ whose endpoint has an edge to $v$. The resulting graph is still an almost $2$-factor. However, the length of the path has strictly increased, contradicting the maximality of $H$.
\end{proof}

Now we introduce the notion of a booster pair:
\begin{definition}
    Let $R$ be a spanning subgraph of $G$. We say $\{e_1,e_2\}\subseteq E(G)\setminus E(R)$ is a \emph{booster pair} for $R$ if $R' = R\cup \{e_1,e_2\}$ is either Hamiltonian or satisfies $L(R')> L(R)$.
\end{definition}

Also, for convenience, we say that an $(\alpha n,2)$-expander $R\subseteq G$ is \emph{good} if it is connected, is non-Hamiltonian, satisfies $L(R)>0$ (i.e.\ contains an almost $2$-factor), and has $e(R)\leq 5\xi n\log n$.
We now state the main lemma of this subsection:
\begin{lemma}\label{lem:existence-booster-pair}
    Whp, every good $(\alpha n,2)$-expander $R\subseteq G_\tau$ has a booster pair in $G_\tau\setminus R$.
\end{lemma}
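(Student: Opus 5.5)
We follow the Montgomery / Alon--Krivelevich strategy. For a fixed good expander $R$ we show that $G$ contains $\Omega(n^4)$ booster pairs, and then use a union bound over all sparse $R$.

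\emph{Step 1 (many boosters in $G$).} Fix a good $R$ and an almost $2$-factor $H\subseteq R$ with $\ell(H)=L(R)$. By \Cref{prop:longest-path}, $H$ contains a balanced strongly maximal path $P$. Apply \Cref{lem:bipartite-posa-rotation} to get distinct $x_1,\dots,x_t\in A$ and sets $Y_i\subseteq B$ with $|Y_i|\ge \alpha n$. For each $y\in Y_i$ there is a strongly maximal path $Q$ from $x_i$ to $y$ with $V(Q)=V(P)$. Replacing $P$ by $Q$ in $H$ keeps an almost $2$-factor, and adding the edge $x_iy$ closes it into a cycle on $V(P)$, raising $\ell$ by one. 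So any single $G$-edge $x_iy$ with $y\in Y_i$ already works, and paired with an arbitrary second edge it gives a booster pair.

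The issue is that $x_i$ may have no $G$-neighbours in $Y_i$, since $\deg_G(x_i)\ge(1/2+\eps)n$ but $|Y_i|$ is only $\alpha n$. This is why we need pairs. I would use the $B$-side version of \Cref{lem:bipartite-posa-rotation} as well:
\begin{itemize}
\item Because $\deg_G(x_i)>n/2$, a first edge $x_iz$ with $z\in B$ can be used to rotate, producing an endpoint set in $A$.
\item The second edge then connects two endpoints whose neighbourhoods are each larger than $n/2$, which is the parity-aware mechanism.
\end{itemize}
Concretely, let $Z$ be the set of all $B$-endpoints reachable from $x_i$ (with $x_i$ fixed), so $|Z|\ge\alpha n$. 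Take the edge $x_iz$ for $z\in N_G(x_i)\cap V(P)$, at one of $\ge(1/2+\eps)n-|B\setminus V(P)|$ positions, and rotate at it to obtain a new $A$-endpoint $a$ (the path predecessor of $z$). Then require a second edge from $a$ to an endpoint in $Y$-type sets.

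The cleanest way to guarantee $\Omega(n^2)$ choices for each of the two edges is to double count via the minimum degree. If $P$ is almost spanning, $\sum_{a}|N_G(a)\cap Y|$ over linearly many candidate $a$ exceeds the complement bound by $\eps n\cdot\alpha n$. If $P$ is short, the vertices outside $V(P)$ lie on cycles of $H$, and an edge from an endpoint into such a cycle merges it into the path. So we get $\ge c(\eps,\alpha)n^4$ pairs $\{e_1,e_2\}\subseteq E(G)\setminus E(R)$ that are boosters. Edges already in $R$ are excluded at no cost, since $e(R)=o(n^2)$.

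\emph{Step 2 (union bound).} Work in $G_m$ for $m=m^-$, using \Cref{lem:hitting-time-window} and the coupling of \Cref{prop:monotonicity-m}, so that $G_{m^-}\subseteq G_\tau$. Failure means that some good $R\subseteq G_{m^+}$ with $e(R)=r\le5\xi n\log n$ has no booster pair in $G_{m^-}$.

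For a fixed $R$:
\begin{itemize}
\item $\Pr(R\subseteq G_{m^+})\le(2\log n/n)^r$.
\item Conditioned on this, the chance that none of the $cn^4$ booster pairs lies in $G_{m^-}$ is bounded by the hypergeometric Janson inequality \Cref{prop:janson}. Here $\mu\approx cn^4(\log n/n)^2=cn^2\log^2n$. Pairs sharing an edge give $\Delta\le n^6 p^3=O(n^3\log^3 n)$, so choosing $q=\Theta(1/n)$ gives $\exp(-\Omega(n\log^2 n))$.
\item The number of $R$ is at most $\binom{n^2}{r}$.
\end{itemize}
The sum is then
\[
\sum_{r\le 5\xi n\log n}\Bigl(\frac{en^2}{r}\cdot\frac{2\log n}{n}\Bigr)^{r}e^{-\Omega(n\log^2 n)}\le \sum_{r} e^{O(\xi n\log n\log\log n)}e^{-\Omega(n\log^2n)}=o(1).
\]

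\emph{Main obstacle.} The hard part is Step 1: showing $\Omega(n^4)$ boosters using only $\delta(G)\ge(1/2+\eps)n$ with bipartite parity. The endpoint sets from \Cref{lem:bipartite-posa-rotation} have size only $\alpha n$, so one has to chain two rotations whose second endpoint set has density beating $1/2$. My first attempt would be a double count of pairs $(a,b)$, with $a\in A$ an endpoint after the first added edge and $b\in B$ reachable from it, using that $|N_G(a)|+|N_G(b')|>n$ forces linear overlaps.
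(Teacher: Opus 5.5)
\textbf{There is a genuine gap, in Step~1, and Step~2 depends on it.} You do not actually carry Step~1 out, and the target you set there is not the right one. You assert $\ge cn^4$ booster pairs, i.e.\ a constant fraction of all pairs of edges of $G$. The mechanism you sketch cannot produce this. Once $e_1=x_iz$ is used to rotate, the closing edge must be incident to the new endpoint $a$, and $a$ is determined by $e_1$ and the current path. So for fixed $e_1$ and path, $e_2$ has only $O(n)$ choices, and you recover the classical Dirac pairs $\{xv_{j+1},yv_j\}$. There are only $\Theta(n^3)$ of these: $\alpha n$ choices of $x$, $\alpha n$ of $y$, and $\Theta(\eps n)$ of $j$.

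Your argument does not exclude the situation in which no single edge of $G$ is a booster. As you note yourself, $|Y_i|=\alpha n$ while $\deg_G(x_i)$ is only about $(1/2+\eps)n$. In that situation every booster pair needs both edges, and nothing in your sketch gives more than $O(n^3)$ of them.

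Your split into ``$P$ almost spanning'' versus ``$P$ short'' is also not the right dichotomy, since the endpoints of a short path may still have all their $G$-neighbours on $V(P)$. What works is a per-endpoint split, as in the paper:
\begin{enumerate}
\item Call an endpoint bad if it has at least $\eta n$ $G$-neighbours off $V(P)$. Each such edge alone is a booster, by extending into a cycle component of $H$.
\item For pairs $(s,t)$ of good endpoints, apply pigeonhole over the at most $n$ consecutive pairs $(v_i,v_{i+1})$ of $P_{s,t}$. This gives at least $2(\eps-\eta)n-O(1)$ indices $i$ with $sv_{i+1},tv_i\in E(G)$.
\end{enumerate}

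\textbf{Step~2 breaks with only $\Theta(n^3)$ pairs,} because it relies on the trivial bound $\Delta\le n^6p^3$. Now $\mu=\Theta(n\log^2 n)$, and Janson yields nothing. Even beyond the trivial bound, without a codegree bound $\Delta$ can genuinely be of order $n^5p^3$. For example, take $\Theta(n)$ single-edge boosters, each padded with every other edge; here the failure probability really is about $1/n$.

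The missing ingredient is codegree control. Lemma~\ref{lem:many-boosters} produces at least $Cn^3$ booster pairs in which every edge lies in at most $2n$ pairs. An edge $sv_{i+1}$ fixes $s$, leaving at most $n$ choices of $t$, and the padding edges for bad endpoints are assigned cyclically. Hence $\Delta\le 4Cn^4p^3$ and $\mu^2/\Delta=\Theta(Cn\log n)$.

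\textbf{Your Step~2 arithmetic is off even granting $n^4$.} With $\mu\approx cn^2\log^2 n$ and $\Delta=O(n^3\log^3 n)$, taking $q=\Theta(1/n)$ makes the exponent $-q\mu+q^2\Delta/2$ positive. The optimal choice $q=\mu/\Delta$ gives only $\exp(-\Theta(c^2 n\log n))$, not $\exp(-\Omega(n\log^2 n))$.

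On the other side, $\sum_{r\le 5\xi n\log n}\binom{e(G)}{r}(2\log n/n)^r=\exp(O(\xi\log(1/\xi)\,n\log n))$, with no $\log\log n$ factor. Both sides live at scale $n\log n$, so the union bound closes only because $\xi$ is chosen small relative to the booster constant. With your stated $\exp(O(\xi n\log n\log\log n))$ it would not close.

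Your two-time framework (conditioning on $R\subseteq G_{m^+}$ and seeking boosters in $G_{m^-}$) is a fine substitute for the paper's sum over $m$.
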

The key part of this lemma is proving that there are many booster pairs for $R$ and that they are relatively ``well-spread''.
\begin{lemma}\label{lem:many-boosters}
    For any $\eps\in (0,1)$, there is a constant $C=\Omega(\eps)$ such that the following holds. 
    Let $R\subseteq G$ be a good $(\alpha n,2)$-expander. Then there exists a family $\calB_R$ of booster pairs for $R$ such that $|\calB_R|\geq Cn^3$ and every $e\in E(G)$ is contained in at most $2n$ pairs of $\calB_R$.
\end{lemma}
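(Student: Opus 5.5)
Take an almost $2$-factor $H$ of $R$ with $\ell(H) = L(R)$. Since $R$ is connected and non-Hamiltonian, \Cref{prop:longest-path} shows that $H$ contains a balanced strongly maximal path $P$, with endpoints $v_0 \in B$ and $v_\ell \in A$. The plan is to produce booster pairs of the form $\{x y', x' y\}$, where $x,x'$ are rotation endpoints in $A$ and $y,y'$ are rotation endpoints in $B$, and where the two edges together close a cycle on $V(P)$ or extend the structure.

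Apply \Cref{lem:bipartite-posa-rotation} to $P$. This gives distinct $x_1,\dots,x_t \in A$ and sets $Y_i \subseteq B$ with $|Y_i| \ge \alpha n$, such that for each $y \in Y_i$ there is a strongly maximal path $Q_{i,y}$ from $x_i$ to $y$ with $V(Q_{i,y}) = V(P)$.

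Adding a single $G$-edge $x_i y$ with $y \in Y_i$ closes $Q_{i,y}$ into a cycle on $V(P)$. That cycle has length $e(P)+1$. Replacing $P$ by it in $H$ gives a $2$-factor with a cycle longer than $\ell(H)$, so either $L$ increases or the result is a Hamilton cycle. So single edges are already boosters. To meet the counting requirement, we take pairs $\{x_i y, e\}$ with $y \in Y_i$ and $e$ an arbitrary edge of $G \setminus R$; each such pair is a booster pair because the first edge alone boosts.

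The counting is then as follows.
\begin{itemize}
\item The minimum degree condition is what makes the $x_i$–$Y_i$ edges exist in $G$. Each $x_i$ has at least $(1/2+\eps)n$ neighbours in $G$, but $Y_i$ only has size $\alpha n = n/10$, so the degree bound does not by itself force $N_G(x_i)\cap Y_i\neq\emptyset$.
\item To fix this, I would strengthen the rotation step so that $|Y_i| \ge (1/2-\eps/2)n$. The argument is that the set of rotation endpoints $Y$ has $N_R(Y)\subseteq Y'$ with $|Y'| \le 2|Y|$, and this would have to hold while $P$ is maximal inside the $2$-factor. This step I expect to be the main obstacle. Expansion in $R$ only gives sets of size $\alpha n$, and a larger endpoint set would need $R$ to have no bipartite holes between linear sets of sizes summing to more than $n$. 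A sparse $R$ need not have that property.
\item The alternative is to use pairs genuinely. After one rotation step, take a second endpoint set from each $y \in Y_i$, giving sets $X_{i,y}$. Then count quadruples $(x,x',y,y')$ in which the two added edges $xy'$ and $x'y$ together reconnect the path into a cycle, in the style of Bollobás–Kohayakawa. In this version $\eps$ enters through the choice of sets $S\subseteq A$, $T\subseteq B$ with $|S|+|T|>n$, which by \Cref{cla:many-edges}-type reasoning in $G$ span $\Omega(\eps n^2)$ edges.
\end{itemize}

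With either version, the target is about $\alpha n$ choices of $x_i$, at least $\eps n$ neighbours of $x_i$ in $G$ falling into the relevant set, and $\Omega(n^2)$ choices for the second edge. This gives $|\calB_R| \ge C n^3$ with $C = \Omega(\eps)$. To ensure that each edge $e$ lies in at most $2n$ pairs, I would fix the partner edges for each $e$ by a deterministic rule. One way is to let the first edge determine $x_i$ and $y$, and to restrict the second edge to be incident to a fixed vertex depending on $(x_i,y)$; then any given edge lies in at most $\deg_G \le n$ pairs as first edge plus $n$ pairs as second edge. If the bound on the number of pairs per edge is violated, I would delete the excess pairs, which costs only lower-order terms.
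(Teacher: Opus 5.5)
\textbf{There is a genuine gap.} Your single-edge observation is correct. But, as you note yourself, $\delta(G)$ does not force any $G$-edge from $x_i$ into $Y_i$, because $|Y_i|$ is only guaranteed to be $\alpha n$. There is also no reason the endpoint sets of a sparse expander should have size $(1/2-\eps/2)n$.

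Your second route is not a proof either. An arbitrary pair $\{xy',x'y\}$ with all four vertices rotation endpoints does not in general close a cycle on $V(P)$. Your final count ($\alpha n$ choices of $x_i$, $\eps n$ neighbours ``in the relevant set'', $\Omega(n^2)$ partners) only works if the first edge is already a single booster. So it presupposes the $\Omega(\eps n^2)$ single boosters that neither route supplies.

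The idea you are missing is the crossing pair along one rotated path. For $P_{s,t}=(v_1,\dots,v_\ell)$ with $v_1=s$ and $v_\ell=t$, adding $sv_{i+1}$ and $tv_i$ gives the cycle $v_1\cdots v_i v_\ell v_{\ell-1}\cdots v_{i+1}v_1$ on $V(P)$. Here $v_i,v_{i+1}$ are consecutive path vertices, not endpoints. Also, $\eps$ enters through a pigeonhole over positions rather than an $e_G(S,T)$ bound:
\begin{itemize}
\item Suppose $s$ and $t$ each have at most $\eta n$ $G$-neighbours outside $V(P)$, with $\eta=\eps/100$.
\item The indices $i$ with $v_{i+1}\in N_G(s)$, and those with $v_i\in N_G(t)$, form two subsets of a set of at most $|V(P)\cap A|\le n$ indices.
\item Each subset has size at least $(1/2+\eps-\eta)n-O(1)$, so they share at least $2(\eps-\eta)n-O(1)$ indices.
\end{itemize}
Summing over the $\Omega(n^2)$ pairs $(s,t)$ from \Cref{lem:bipartite-posa-rotation} gives $\Omega(\eps n^3)$ pairs. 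The multiplicity bound is then automatic. An edge in the role $sv_{i+1}$ fixes $s$ (its $A$-end), leaving at most $n$ choices of $t$, after which $P_{s,t}$ fixes $i$; the role $tv_i$ is symmetric. Pairs meeting $E(R)$ number only $O(n\,e(R))=o(n^3)$.

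The positional count needs $V(P)$ to contain almost all $G$-neighbours of both endpoints. So you also need a complementary case, which your proposal never addresses. If $V(P)$ is far from spanning, endpoints can have $\Omega(n)$ $G$-neighbours outside $V(P)$, and the intersection bound above then gives nothing. In that case, an edge from a rotated endpoint $s$ to a vertex $u$ on a cycle component of $H$ is by itself a booster: open that cycle at $u$ and append it to the rotated path ending at $s$. Hence $\Omega(n)$ such ``bad'' endpoints yield $\Omega(\eta n^2)$ single boosters. Each of these is paired cyclically with $2n$ padding edges of $G\setminus R$ not incident to the bad vertices, which keeps every edge in at most $2n$ pairs.

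Your multiplicity rule (second edge incident to a vertex determined by $(x_i,y)$, then ``delete excess pairs'') is not justified as stated. Many first edges may share the same designated vertex, and deleting pairs without further argument can destroy the $Cn^3$ lower bound.
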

\begin{proof}
    Since $R$ is a good $(\alpha n,2)$-expander, there exists an almost $2$-factor. Let $P$ be a balanced strongly maximal path in an almost $2$-factor attaining $L(R)$; such a path exists by~\Cref{prop:longest-path}. 
    By~\Cref{lem:bipartite-posa-rotation}, there is a set of at least $\alpha n$ vertices $S$ such that for each $s\in S$, there is a set of at least $\alpha n$ vertices $T_s$ such that for each $t\in T_s$, there is a path $P_{s,t}\subseteq R$ of the same length as $P$ with vertex set $V(P)$. 
    Without loss of generality, assume $S\subseteq A$.

    Let $\eta = \eps/100$. 
    Let $A' = A\setminus V(P)$ and $B' = B\setminus V(P)$. 
    We call an endpoint $s\in S\cap A$ \emph{good} if $|N_{G}(s)\cap B'|\leq \eta n$ and \emph{bad} otherwise. Also, we call an endpoint $t\in (\cup_{s\in S}T_s)\cap B$ \emph{good} if $|N_{G}(t)\cap A'|\leq \eta n$ and \emph{bad} otherwise.

    Suppose that for at least $\alpha^2n^2/2$ pairs $(s,t)$, at least one endpoint is bad. Since every vertex appears in at most $n$ pairs, there are at least $\alpha^2 n/2$ bad vertices.
    Without loss of generality, assume that at least $\alpha^2 n/4$ vertices $s\in A$ are bad.
    
    Observe that if $s$ is bad, then every neighbour of $s$ in $B'$ strictly increases $L(R)$. 
    Indeed, replacing the path component of $P$ by the rotated path with endpoint $s$ and then using the edge from $s$ to a vertex $s'$ of a cycle component outside $V(P)$ while breaking an edge incident to $s'$ on the cycle, we obtain a longer path than $P$.
    Thus, such an edge is not in $R$; otherwise, it would contradict strong maximality.
    For each such edge $e$, we can pair it with any edge $f\in E(G)\setminus (E(R)\cup \{e\})$ to get a booster pair (note that $f$ is only for padding since $e$ itself already increases $L(R)$).
    
    Now we show how to obtain $\Omega(n^3)$ booster pairs while ensuring that each edge appears at most $2n$ times. 
    Since each bad vertex $s\in A$ is bad, it has at least $\eta n$ incident edges to vertices outside the path. 
    For concreteness, let us fix a subset of bad vertices $D\subseteq A$ of size $\alpha^2 n/4$ and a set of $\eta n$ edges incident to each vertex.
    Let $F$ denote the set of all such edges. Note that $|F|\geq \alpha^2 \eta n^2/8$.
    We now pair each such edge with $2n$ edges in $E(G)\setminus E(R)$ in such a way that no edge appears in more than $2n$ pairs. 
    Note that this gives the desired $\calB_R$.
    Consider all edges in $E(G)\setminus E(R)$ that are not incident to any vertex of $D$. The number of such edges is at least $(1/2+\eps )n^2 - 5\xi n\log n - \alpha^2 n^2/2 \geq 0.49n^2$ for all sufficiently large $n$. We may order them arbitrarily and then assign to each edge $e\in F$ its $2n$ candidates cyclically.
    Edges in $F$ appear exactly $2n$ times by construction. Edges not incident to $D$ appear at most $2n|F|/0.49n^2 \leq 2n$ 
    times, since $|F|\leq |D|\eta n = \alpha^2\eta n^2/4$ and $\eta\leq 1/100$. This gives at least $2n|F|= \Omega(n^3)$ booster pairs.

    Otherwise, for at least $\alpha^2n^2/2$ pairs $(s,t)$, both endpoints are good. In this case, for each pair $(s,t)$, let $P_{s,t} = (v_1,\dots, v_\ell)$ and  
    define $I_{s,t} = \{1\leq i<\ell: \{s,v_{i+1}\}, \{t,v_i\}\in E(G)\}$. 
    Note that every $i\in I_{s,t}$ gives a booster pair $\{sv_{i+1}, tv_{i}\}$, since adding these two edges creates a cycle on $V(P)$ and replacing the path component of the maximizing almost $2$-factor by this cycle increases $\ell$ by 1.
    Since both endpoints are good, we know that 
    $$|I_{s,t}|\geq \deg_G(s, V(P)\cap B ) + \deg_G(t, V(P)\cap A) - n - O(1)\geq 2(\eps -\eta)n - O(1)\,,$$
    where the $O(1)$ comes from the possible loss near the end of the path.
    
    Observe that each edge appears at most $2n$ times. Indeed, if an edge appears as $\{s,v_{i+1}\}$, then one of the endpoints is $s$ and there are at most $n$ choices for $t$. Similarly, if it appears as $\{t,v_i\}$, then the same reasoning gives at most $n$ appearances. 
    To count booster pairs, note that we have $\alpha^2 (\eps - \eta)n^3 = \Omega(\eps n^3)$ candidates by taking the union of $I_{s,t}$ over all such pairs $(s,t)$. The number of such pairs that intersect $R$ is at most $O(n\cdot e(R)) = o(n^3)$. 
    Also, every edge pair can participate at most $4! = O(1)$ times in $\cup I_{s,t}$, where the union is over the set of $(s,t)$ pairs defined above (by specifying which vertex plays the role of $s$, $t$, $v_i$ and $v_{i+1}$). 
    Thus, in both cases we have at least $\Omega(n^3)$ booster pairs in total. 
\end{proof}

We now prove~\Cref{lem:existence-booster-pair}:
\begin{proof}[Proof of~\Cref{lem:existence-booster-pair}]
    Recall that $m^- = e(G)\cdot p_-$ and $m^+ = e(G)\cdot p_+$. From~\Cref{lem:hitting-time-window}, we know whp $m^-\leq \tau\leq m^+$. 
    Fix an arbitrary $m \in [m^-,m^+]$ and an arbitrary good $(\alpha n,2)$-expander $R\subseteq G$. Let $r = e(R)$. Note that $\Pr(R\subseteq G_{m}) = \binom{e(G)-r}{m-r}/\binom{e(G)}{m}\leq (p_+)^r$. Let $m' = m - r$. Observe that conditioned on $R\subseteq G_m$, the remaining graph $G' = G_m\setminus R$ is distributed as a uniformly random set of $m'$ edges in $E(G)\setminus E(R)$.  
    
    By~\Cref{lem:many-boosters}, we know there is a family $\calB_R$ of at least $Cn^3$ booster pairs and each edge of $G$ is involved in at most $2n$ of them. Build an auxiliary graph $\Gamma$ whose vertices are $E(G)\setminus E(R)$ and there is an edge between two vertices $u,v\in V(\Gamma)$ if and only if $(u,v)\in \calB_R$. Note that $\Delta(\Gamma)\leq 2n$ and $e(\Gamma)\geq Cn^3$. 
   For concreteness, assume $e(\Gamma)= Cn^3$.
    It remains to show that a uniformly chosen $m'$-vertex subset of $V(\Gamma)$ induces at least one edge. To do so, we invoke~\Cref{prop:janson} with the following parameters: 
    \[
        \Omega \leftarrow E(G)\setminus E(R),\; n\leftarrow e(G) - r,\; m\leftarrow m',\; I = \calB_R\,.
    \]
    Note that $p = (m-r)/(e(G)-r)\geq 0.99p_-$. Also, 
    \[
        \mu = \sum_{i\in I}p^2 = Cn^3 p^2\quad\text{ and }\quad \Delta = \sum_{\substack{(e,f)\in \calB_R^2:\\ e\neq f, \\e\cap f\neq \emptyset}}p^{3} \leq  2\cdot Cn^3 \cdot 2n\cdot p^3 = 4Cn^4 p^3\,.
    \]
    Applying~\Cref{prop:janson} with $q = \mu/(2\Delta)<1$, we conclude that, conditioned on $R\subseteq G_m$, with probability at least $1-\exp(-\Omega(Cn\log n))$, there exists at least one booster pair in $G'$.
    Thus, by the union bound, the probability that~\Cref{lem:existence-booster-pair} fails is at most 
    \begin{align*}
        &\sum_{m=m^-}^{m^+}\sum_{\substack{R\subseteq G:\\ \text{$R$ is a good}\\\text{$(\alpha n,2)$-expander}}} \Pr(R\subseteq G_m)\cdot \Pr(\text{there is no booster pair for $R$ in $G_m\setminus R$}\mid R\subseteq G_m)\\
        &\leq  \sum_{m=m^-}^{m^+}\sum_{r=0}^{5\xi n\log n}\binom{e(G)}{r}(2\log n/n)^r\cdot \exp(-\Omega(Cn\log n))\\
        &= o(1)\,,
    \end{align*}
    where we used that $\xi$ is sufficiently small compared to $\eps$ and that $C=\Omega(\eps)$.
\end{proof}

\subsection{Proof of Theorem~\ref{thm:main} and Corollary~\ref{cor:threshold}}
We are now ready to prove~\Cref{thm:main}:
\begin{proof}[Proof of~\Cref{thm:main}]
    By~\Cref{lem:sparse-expander-almost-2factor-exists}, whp there exists a connected spanning subgraph $R\subseteq G_\tau$ such that $R$ is an $(\alpha n,2)$-expander, $e(R)\leq 4\xi n\log n$ and $R$ contains an almost $2$-factor (equivalently $L(R) > 0$). 
    By~\Cref{lem:existence-booster-pair}, there exists a booster pair for $R$. 
    Add this booster pair to $R$. Repeat this step of finding and adding booster pairs.
    Note that there are at most $2n$ steps, since if $L(R) = 2n$, then $R$ must be Hamiltonian. Moreover, because the total number of edges added is at most $4n$, all properties of good expanders are preserved under addition of edges, and~\Cref{lem:existence-booster-pair} applies to every good $(\alpha n,2)$-expander. Thus the procedure can continue, since $4\xi n\log n + 4n< 5\xi n\log n$.
\end{proof}

The proof of~\Cref{cor:threshold} is essentially a first and second moment computation similar to the proof of~\Cref{lem:hitting-time-window}. 
\begin{proof}[Proof of~\Cref{cor:threshold}]
    Let $p = (\log n + \log\log n +\gamma)/\delta(G)$, where $\gamma = \gamma(n) = \omega(1)$.
    By~\Cref{prop:Gm-to-Gp}, it suffices to prove that $G_{m'}$ is Hamiltonian whp for every $m' = e(G)p\pm\lambda\sqrt{e(G)p(1-p)}$. where $\lambda = \omega(1)$ and $\lambda \sqrt{\log n/n} = o(\gamma)$. 

    Let 
    $$p' = m'/e(G) \geq p - \lambda\sqrt{p(1-p)/e(G)}\geq (\log n + \log\log n+\gamma/2)/\delta(G)\,.$$
    Then,
    $$\Pr(\deg_{G_{p'}}(v)\leq 1)\leq \Pr(\Bin(\delta(G), p')\leq 1)\leq (1-p')^{\delta(G)} + \delta(G)\cdot p' \cdot (1-p')^{\delta(G)-1} = o(n^{-1})\,.
    $$
    By the union bound over all vertices, we conclude that, whp, there is no vertex of degree at most $1$ in $G_{p'}$. 
    Thus, by~\Cref{prop:model-equiv}, we conclude that, whp, there is no vertex of degree at most $1$ in $G_{m'}$. Hence $m'\geq \tau_2(G)$. By~\Cref{thm:main}, whp $\tau_\calH(G)= \tau_2(G)\leq m'$, and thus $G_{m'}$ is Hamiltonian whp, which finishes the proof.

    Let $p^- = (\log n + \log\log n -\omega(1))/\delta(G)$.
    Let $S$ be the set of vertices with degree $\delta(G)$ on $A$. Without loss of generality, assume $|S|\geq \beta n/2$. 
    If $p\leq p^-$, then for $v\in S$, let $X_v$ be the indicator random variable for $\deg_{G_{p}}(v) \leq 1$ and $X = \sum_{v\in S}X_v$. Thus, 
    \[
        \E[X] = \sum_{v\in S}\Pr(\Bin(\delta(G), p) \leq 1) = |S|\left(
            (1-p)^{\delta(G)} + \delta(G) \cdot p \cdot (1-p)^{\delta(G)-1}
        \right)\geq e^{\omega(1)} = \omega(1)\,.
    \]
    Since the random variables $X_v$ for $v\in S$ depend on disjoint sets of edges, they are independent, and so $\Var[X] \leq \E[X]$. By Chebyshev's inequality, $\Pr(X > 0) = 1-o(1)$. Therefore, $G_{p}$ is non-Hamiltonian whp.
\end{proof}

\section{Concluding remarks}
We have shown that for any $\eps \in (0,1/2]$, in a balanced bipartite graph on $2n$ vertices with minimum degree at least $(1/2 + \eps)n$, whp $\tau_2(G) = \tau_{\calH}(G)$. An immediate natural question is whether one can push the minimum degree all the way to the Moon-Moser bound $(n+1)/2$. The following construction gives a negative answer. 
\begin{proposition}\label{prop:counter-example}
    There exists a balanced bipartite graph on $2n$ vertices such that with positive probability, $\tau_2(G)< \tau_{\calH}(G)$.
\end{proposition}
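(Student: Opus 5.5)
We construct a graph with minimum degree about $n/2$ in which, with probability bounded away from zero, the random process reaches minimum degree $2$ strictly before it becomes Hamiltonian. The obstruction comes from a sparse bottleneck between two dense halves.

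\textbf{The construction.} Take $n$ even and split $A=A_1\cup A_2$ and $B=B_1\cup B_2$ with $|A_i|=|B_i|=n/2$. Let $G$ consist of two disjoint copies of $K_{n/2,n/2}$, on $A_1\cup B_1$ and on $A_2\cup B_2$. Add a perfect matching $M_1$ between $A_1$ and $B_2$ and a perfect matching $M_2$ between $A_2$ and $B_1$. Then every vertex has degree $n/2+1\geq (n+1)/2$, so the Moon--Moser condition holds. Every Hamilton cycle in any subgraph of $G$ must cross the cut between $A_1\cup B_1$ and $A_2\cup B_2$ an even number of times, and at least twice. Moreover, the only cut edges are $M_1\cup M_2$, which has $n$ edges in total.

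\textbf{Locating $\tau_2$.} By the same first/second moment computation as in \Cref{lem:hitting-time-window}, with $\delta(G)\approx n/2$, $\tau_2$ is concentrated around $e(G)\cdot p$ with $p=(\log n+\log\log n+O(1))/(n/2)$. More precisely, $\tau_2/e(G)$ lies in $[(2\log n+2\log\log n-C)/n,\,(2\log n+2\log\log n+C)/n]$ with probability at least $1-\eta$ for any fixed $\eta$, provided $C$ is large. At time $m\approx e(G)\cdot 2\log n/n$, each cut edge is present independently-ish with probability about $2\log n/n$. The number of cut edges present therefore has mean about $n\cdot 2\log n/n=2\log n\to\infty$, so whp many cut edges are present. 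This means that simply having no cut edges cannot block Hamiltonicity; the construction needs to be modified.

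\textbf{Modified construction.} Use a single cut edge instead. Let $G$ consist of the two copies of $K_{n/2,n/2}$ together with one crossing structure that keeps the minimum degree at $(n+1)/2$ but makes the cut small. For example, take parts of sizes $(n+1)/2$ and $(n-1)/2$ arranged so that the cut has only $O(1)$ edges, in the spirit of the Moon--Moser extremal example: two copies of $K_{(n+1)/2,(n-1)/2}$ placed with opposite orientations, plus a couple of edges to balance the degrees. With only $O(1)$ cut edges, each present at time $\tau_2=\Theta(n\log n)$ with probability $O(\log n/n)$, whp no cut edge is present at $\tau_2$. The graph $G_{\tau_2}$ is then disconnected, so $\tau_2<\tau_{\calH}$ whp, and in particular with positive probability.

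\textbf{Verification and main obstacle.} The remaining step is to check the degree condition exactly. Every vertex in the smaller side of each block must reach degree $\ge (n+1)/2$, which forces at least one extra edge per deficient vertex. That would push the number of cut edges up to about $n/2$, not $O(1)$. This degree bookkeeping is the main obstacle. The fallback is to keep a cut of size $k=\Theta(n)$, but route it through the parity structure, so that a Hamilton cycle needs cut edges of both types $A_1B_2$ and $A_2B_1$. One type is made very sparse (constant size), while the degree deficit is absorbed by the other type. Then, with probability bounded below by a constant, none of the $O(1)$ sparse-type edges appear by time $\tau_2$, which yields $\tau_2<\tau_{\calH}$ with positive probability.
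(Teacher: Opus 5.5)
Your proposal never arrives at a valid graph. You correctly discard the two-matchings construction and the ``$O(1)$ cut edges'' construction. The final fallback, a cut for which every Hamilton cycle must use edges of both types $A_1B_2$ and $A_2B_1$ with one type of constant size, is impossible under $\delta(G)\ge (n+1)/2$.

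Suppose a Hamilton cycle uses $k_1$ edges of type $A_1B_2$ and $k_2$ of type $A_2B_1$. Its restriction to $A_1\cup B_1$ is a union of paths whose ends are the endpoints in $A_1\cup B_1$ of these $k_1+k_2$ edges, counting a one-vertex path as having two ends. A path with both ends in $A_1$ has one more vertex in $A_1$ than in $B_1$, and symmetrically for $B_1$, so $k_1-k_2=2(|A_1|-|B_1|)$. Hence both types are forced exactly when $|A_1|=|B_1|=:s$.

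\begin{itemize}
\item If $s\le n/2$, every vertex of $A_1$ has at most $s<(n+1)/2$ neighbours in $B_1$, so it needs a neighbour in $B_2$. Likewise every vertex of $B_1$ needs one in $A_2$.
\item If $s>n/2$, the same holds for $A_2$ (which needs edges to $B_1$) and for $B_2$ (which needs edges to $A_1$).
\end{itemize}

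A vertex's deficit can only be covered by edges at that vertex, and their type is fixed by the part it lies in. So nothing can be ``absorbed by the other type'': each type has at least $(n-1)/2$ edges in $G$. Since $\tau_2\ge e(G)\log n/n$ whp, $\Omega(\log n)$ edges of each type are present at time $\tau_2$ whp.

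Unbalanced parts do not help either. If $|B_1|=|A_1|+d$ with $d\ge 1$, the cycle needs only $2d$ edges of type $A_2B_1$. However, each vertex of $B_1$ needs at least $(n+1)/2-|A_1|$ neighbours in $A_2$, and each vertex of $A_2$ needs at least $|A_1|+d-(n-1)/2$ neighbours in $B_1$. These two requirements sum to $d+1$, which forces $\Omega((d+1)n)$ such edges in $G$, and hence an expected $\Omega((d+1)\log n)\gg 2d$ of them at time $\tau_2$. So at the Moon--Moser bound there is no obstruction of the form ``some class of cut edges is (nearly) absent at time $\tau_2$'', and your argument produces no graph.

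The missing idea is a \emph{local} obstruction that holds only with probability bounded away from $0$, which is all the statement asks for, rather than a sparse cut that is empty whp. The paper takes $n=2m+1$, $A=A_0\cup A_1$ and $B=B_0\cup B_1$ with $|A_0|=|B_1|=m$ and $|A_1|=|B_0|=m+1$. It includes all edges between $A_0$ and $B_0$, between $B_0$ and $A_1$, and between $A_1$ and $B_1$, so $\delta(G)=m+1=(n+1)/2$.

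In the language of your parity count, the cut between $A_0\cup B_0$ and $A_1\cup B_1$ is dense. But since there are no $A_0B_1$ edges, every Hamilton cycle must cover $A_0\cup B_0$ by a single path with both ends in $B_0$. Equivalently, all $2m$ cycle edges at $A_0$ land in $B_0$, so at most two vertices of $B_0$ have fewer than two cycle-neighbours in $A_0$.

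Set $p=(\log m+\log\log m)/m$ and look at time $e(G)p$. Let $X$ be the number of vertices of degree at most $1$, and $Y$ the number of vertices of $B_0$ with at most one neighbour in $A_0$. These are jointly asymptotically Poisson with constant means. So with probability bounded away from $0$ we have $X=0$ and $Y\ge 3$, i.e.\ $\tau_2\le e(G)p<\tau_{\calH}$.
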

\begin{proof}[Proof sketch]
    Let $G$ be a balanced bipartite graph on $4m+2$ vertices (so that $n = 2m+1$) with sides $A$ and $B$. Let $A = A_0\cup A_1$ with $|A_0| = m$ and $|A_1| = m+1$ and let $B = B_0\cup B_1$ with $|B_0| = m+1$ and $|B_1| = m$. Add all edges between $A_0$ and $B_0$, between $B_0$ and $A_1$ and between $A_1$ and $B_1$. It is easy to check that the minimum degree of $G$ is $m+1 = (n+1)/2$. 
    
    Let $p = (\log m + \log\log m)/m$ and $\tau = e(G)\cdot p$. 
    Also, let $X$ be the number of vertices of degree at most $1$ in $G_\tau$ and let $Y$ be the number of vertices of $B_0$ that send at most one edge to $A_0$ in $G_\tau$. 
    By a direct computation, $X$ and $Y$ jointly converge to Poisson random variables with parameters $c_1, c_2 =\Omega(1)$, respectively.
    A direct calculation shows that with positive probability, $X = 0$ and $Y\geq 3$. The former implies $\tau_2(G)\leq \tau$. 
    By construction, every Hamilton cycle uses $2$ edges incident to each vertex in $A_0$, which implies that all but at most $2$ vertices in $B_0$ must send at least $2$ edges to $A_0$. Thus, by the latter property, $G_\tau$ is non-Hamiltonian, and so $\tau_{\calH}(G)> \tau \geq \tau_2(G)$ with positive probability.
\end{proof}
This leads us to the following natural question:
\begin{problem}
    Does there exist $g = g(n) = o(n)$ such that the following holds?
    Let $G$ be a regular balanced bipartite graph on $2n$ vertices with degree at least $n/2 + g$. Then whp $\tau_2(G) = \tau_{\calH}(G)$?
\end{problem}
There are very powerful results showing that the structure of regular $\eps$-Dirac bipartite graphs is highly constrained (see e.g.~\cite{Kuhn-Osthus}). 
Another interesting open problem is to investigate whether, under the regularity constraint, the minimum degree condition can be relaxed to $(n+1)/2$:
\begin{problem}
    Let $G$ be a regular balanced bipartite graph on $2n$ vertices with degree at least $(n+1)/2$. Is it then true that $\tau_2(G) = \tau_{\calH}(G)$ whp?
\end{problem}
More broadly, it is of interest to find other conditions under which a balanced bipartite graph satisfies the hitting-time theorem for Hamiltonicity. One particular case, where $G$ is a bipartite $C$-expander (see~\cite{Draganic-Kim-Lee-David-Pavez-Sudakov} for definitions), should follow from the same approach used in~\cite{Chen-Chen-Im-Wang}.

\end{document}